\documentclass[reqno,12pt]{amsart}
\pdfoutput=1
\usepackage{amsmath,amsthm,amsfonts,amssymb,ifpdf}
\usepackage{amssymb}
\usepackage{amsmath}
\usepackage{amsthm}
\usepackage{graphicx}
\usepackage[all]{xy}
\usepackage{enumerate}
\usepackage{tikz-cd}

\theoremstyle{plain} 
\newtheorem{theorem}{Theorem}[section]
\newtheorem{proposition}[theorem]{Proposition}
\newtheorem{lemma}[theorem]{Lemma}
\newtheorem{corollary}[theorem]{Corollary}
\newtheorem{question}[theorem]{Question}
\newtheorem{example}[theorem]{Example}

\theoremstyle{definition} \newtheorem{definition}[theorem]{Definition}

\theoremstyle{remark} \newtheorem{remark}[theorem]{Remark}

\usepackage[all]{xy}

\ifpdf
  \usepackage[
    pdftex,
    colorlinks,%
    linkcolor=blue,citecolor=red,urlcolor=blue,
    hyperindex,%
    plainpages=false,%
    bookmarksopen,%
    bookmarksnumbered%
  ]{hyperref} 
 
 \usepackage{thumbpdf}
\else
  \usepackage{hyperref}
\fi

\newcommand\nnfootnote[1]{%
  \begin{NoHyper}
  \renewcommand\thefootnote{}\footnote{#1}%
  \addtocounter{footnote}{-1}%
  \end{NoHyper}
}

\newcommand{\CC}{\mathbb{C}}

\newcommand{\QQ}{\mathbb{Q}}

\newcommand{\ZZ}{\mathbb{Z}}
\def\Id{{\rm Id}}
\def\Tr{{\rm Tr}}
\def\diag{{\rm diag}}
\def\End{{\rm End}}

\newcommand{\ncom}{\newcommand}
\ncom{\mylabel}[1]{{\rm (#1)}\label{#1}}
\ncom{\Hom}{{\textit{Hom}}}
\ncom{\eop}{{\hfill $\Box$}}
\begin{document}
\baselineskip=16pt

\nnfootnote{Mathematics Classification Number: 15A15, 16U40, 16U60.}
\nnfootnote{Keywords:  Commutators, trace, idempotents, matrix ring, units.}

\setcounter{tocdepth}{1}

\title[Power of commutator being identity]{Commutators of finite multiplicative order}

\author{Arijit Mukherjee}
\address{Department of Mathematics\\ Indian Institute of Technology Madras\\ Tamil Nadu - 600 036, India.}
\email{mukherjee7.arijit@gmail.com}

\author{Gobinda Sau}
\address{Department of Mathematics\\ Indian Institute of Technology Kanpur\\ Uttar Pradesh - 208 016, India.}
\email{gobindasau.analytica@gmail.com}

\author{Arindam Sutradhar}
\address{Theoretical Statistics and Mathematics Unit\\ Indian Statistical Institute Bangalore\\ Karnataka - 560059, India.}
\email{arindam1050@gmail.com}

\begin{abstract}
This article studies the equation $[A,B]^k = \Id_n$ for matrices over $\CC$,
characterizing the pairs $(k,n)$ for which solutions exist via a classical result
of Lam and Leung on sums of roots of unity. The problem is next generalized to
matrix rings $M_n(S)$ over arbitrary unital rings $S$, where a sufficient condition
on the unity of $S$ is established and explicit constructions of solutions are provided. Beyond matrix rings, the structural implications of the equation $[a,b]^n = 1$ in a
general unital ring $R$ are investigated, yielding a collection of idempotents whose
properties govern the ring's structure. We prove that under a suitable condition on these idempotents,
$[a,b]^n = 1$ implies $R$ is isomorphic to $M_n(S)$ for some unital ring $S$.  We also provide an alternative proof using a result on characterisation of matrix rings by Goyal and Khurana. These
results together establish a framework connecting commutator equations and classical criteria for recognizing full matrix rings.
\end{abstract}
\maketitle
  


%

\section{Introduction}
 A classical fact of
matrix theory is that every commutator is traceless, and conversely every trace-zero matrix arises as a commutator. In particular,
the identity matrix can never be a commutator.  Therefore, it is natural to ask when it can be realized as a higher power of one : Given positive integers $k, n$, what are the matrices $A,B \in M_n(\CC)$ such that $[A,B]^k=\Id_n$?, where $[A,B]:=AB-BA$ denotes the \textit{commutator} of $A,B$ and $\Id_n$ is the identity matrix.
The relation $C^k = \Id_n$ means that $C=[A,B]$ is diagonalizable with distinct eigenvalues among the $k$-th roots of unity, so the problem reduces to determining when $n$ such roots of unity sum to zero - precisely the vanishing of $\Tr([A,B])$. This
arithmetic of vanishing sums of roots of unity was completely settled in a celebrated
article by Lam and Leung (cf. \cite{LaLe}).  Their result gives a clean arithmetic characterization of those $k$ and $n$ for which such matrices exist (cf. Theorem \ref{Main theorem_existential}).

We then generalize this question to matrix rings over an arbitrary unital ring $S$. Given a positive integer $n$, one can find elements $A,B \in M_n(S)$ such that $[A,B]^n=\Id_n$? This is a natural generalization, as the ring $S$ need not be a field. We show that a sufficient condition is that unity  $1$ of $S$ can be written as the sum of $n-1$ central units of $S$. Under this condition, we explicitly construct elements $A,B \in M_n(S)$ satisfying $[A,B]^n=\Id_n$ (cf. Theorem \ref{thm:main}).

We then move beyond the matrix rings and address the same question in the setting of a general unital ring. It is classical that the existence of elements of $a,b$ in a unital ring $R$, $1$ being the unity of $R$, satisfying $[a,b]=1$ forces a copy of Weyl algebra (cf. \cite[Definition 3, p.~281]{Me}) to embed in $R$.  This suggests that the equation $[a,b]=1$ is a strong structural constraint on $R$. We investigate an analogous question: what does the existence of $a,b \in R$ with $[a,b]^n=1$ imply about the structure of $R$\;? This is a more delicate question. Since $[a,b]^n=1$ is a weaker condition than $[a,b]=1$ and the structure of $R$ need not be as rigid. Nevertheless, meaningful structural conditions can still be drawn.

A fundamental result in this direction is due to Khurana and Lam (cf. \cite{KhLa}), who proved that if there exist idempotents $e_1,e_2$ in a unital ring $R$ such that $[e_1,e_2]$ belong to the set of invertibles $U(R)$, then $R \cong M_2(S)$ for some unital ring $S$. 
This is a striking result as it shows existence of a single commutator relation among idempotents can force the entire ring to be a full matrix ring. We establish a structural theorem for an arbitrary ring $R$: the existence of elements $a,b \in R$ satisfying $[a,b]^n
=1$ yields a collection of idempotents (cf. Lemma \ref{lem: Creating_idempotents}), and under a suitable condition on these idempotents, we conclude that $R \cong M_n(S)$, for some unital ring $S$ (cf. Theorem \ref{thm: Structure theorem due to commutator}). This provides a new criterion for recognizing when a ring is isomorphic to a full matrix ring, and highlights a connection between commutator equations and the classical theory of matrix ring characterizations.  We also provide an alternative proof of Theorem \ref{thm: Structure theorem due to commutator} using a result from the paper entitled ``A characterisation of matrix rings" by Goyal and Khurana (cf. \cite{GoKh}). While our final results are ring-theoretic, the engine throughout is linear-algebraic — the spectral structure of a commutator and the idempotents it generates and it is this thread that unifies the three settings in three upcoming sections. 
\section{Commutators of finite multiplicative order in $M_n(\mathbb{C})$}
It is well known that commutators in $M_n(\mathbb{C})$, the ring of $n\times n$ matrices with complex entries, are synonymous with trace zero matrices.  Therefore, it is obvious that the identity matrix $\Id_n\in M_n(\mathbb{C})$ can never be a commutator.  On that account, it is natural to ask whether $\Id_n$ can be realized as any higher power of a commutator or not.  We precisely ask the following :
\begin{question}\label{Main question_Section 2}
Let $k,n$ be two positive integers greater than $1$.  Let $M_{k,n}$ be the following subset of $M_n(\mathbb{C})\times M_n(\mathbb{C})$ :
$$M_{k,n}:=\big\{(A,B)\in M_n(\mathbb{C})\times M_n(\mathbb{C}) \mid [A,B]^k=\Id_n\big\}.$$ 
Then, for what values of $n$ and $k$, is the set $M_{k,n}$ non-empty?
\end{question}

Before answering this in complete generality, we give some instances where $M_{k,n}$ is non-empty by constructing some $A,B\in M_n(\mathbb{C})$ such that $(A,B)\in M_{k,n}$.  For a matrix $C$, we denote its trace by $\Tr(C)$.  Now, as for any $C\in M_n(\mathbb{C})$ with $\Tr(C)=0$, many existential and constructive proofs are available in the literature (cf. \cite{Sh} and \cite{AlMa}) for finding $A,B\in M_n(\mathbb{C})$ such that $[A,B]=C$, it is enough to produce some $C\in M_n(\mathbb{C})$ such that $\Tr(C)=0$ and $C^k=\Id_n$.
\begin{example}\label{Example 1_section 2}
\textbf{Both $n$ and $k$ are even}:  The diagonal matrix $C=\diag(1,-1,\cdots,1,-1)$ of size $n$ satisfies $\Tr(C)=0$ and $C^k=\Id_n$.
\end{example}
\begin{example}\label{Example 2_section 2}
\textbf{$k$ divides $n$}:  Let  $\zeta=e^{\tfrac{2\pi \mathrm{i}}{k}}$, where $\mathrm{i}\in \mathbb{C}$ is the imaginary unit satisfying $\mathrm{i}^2=-1$.  Then, the diagonal matrix $C_k=\diag(1,\zeta,\zeta^2,\dots,\zeta^{k-1})$ of size $n$ satisfies $C_k^k=\Id_k$ and $\Tr(C_k)=\sum_{i=0}^{k-1}\zeta^i=0$.  Now, as $n=kt$ for some positive integer $t$, 
the matrix $C\in M_{n}(\mathbb{C})$ given as
\begin{equation*}
    C=\diag(C_k,C_k,\dots,C_k) \;\;(t\; \text{many blocks})
\end{equation*}
satisfies the equalities $\Tr(C)=0$ and $C^k=\Id_{n}$.
\end{example}

We now provide a complete answer to Question \ref{Main question_Section 2}.  Let $m$ be any positive integer.  By $\mathcal{C}^m$ we denote the cyclic subgroup of $\mathbb{C}^{\ast}$ of all $m$-th roots of unity.  In \cite{LaLe}, the authors studied all possible values of $n$ such that there exist $n$ many $m$-th roots of unity that add up to zero.  The authors showed that the \textit{weight set} $W(m)$ \textit{of} $m$, defined as 
\begin{equation*}
    W(m):=\Big\{n\in \mathbb{Z}^+\mid \text{there exist\;}n \text{\;many\;} \eta_i\in \mathcal{C}^m \text{\;such that\;}\sum_{i=1}^n\eta_i=0 \Big\},
\end{equation*}
is obtained by taking all possible $\mathbb{N}$-combinations of prime divisors of $m$.  Precisely, \begin{proposition}[Main Theorem, p.~ 92, \cite{LaLe}]\label{Lam-Leung}
 Let $m=p_1^{\alpha_1}p_2^{\alpha_2}\cdots p_r^{\alpha_r}$ be a positive integer.  Then the weight set $W(m)$ is given as follows :
 $$W(m)=\mathbb{N}p_1+\mathbb{N}p_2+\cdots +\mathbb{N}p_r.$$
\end{proposition}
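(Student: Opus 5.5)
Proving the Lam--Leung theorem (Proposition \ref{Lam-Leung}) means establishing two inclusions; the inclusion $\mathbb{N}p_1+\cdots+\mathbb{N}p_r\subseteq W(m)$ is the easy one. For each prime $p_i\mid m$, the $p_i$-th roots of unity lie in $\mathcal{C}^m$ and sum to zero. This gives $p_i\in W(m)$, and $W(m)$ is closed under addition, since we may concatenate vanishing sums. So every $\mathbb{N}$-combination of the $p_i$ lies in $W(m)$. Here $\mathbb{N}$ is read so that the combinations are positive integers.

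For the reverse inclusion, I would first reduce to the case where $m$ is squarefree. A vanishing sum of $m$-th roots of unity is a relation in $\mathbb{Z}[\zeta_m]$. Write $m=m_0 q$, where $m_0=p_1\cdots p_r$ is the radical of $m$. Then $\mathbb{Q}(\zeta_m)$ has a basis over $\mathbb{Q}(\zeta_{m_0})$ consisting of powers $\zeta_m^j$, and every $m$-th root of unity is of the form $\zeta_m^j\cdot(\text{an } m_0\text{-th root})$ up to rearrangement. Grouping the terms of the sum by coset, a vanishing sum splits into independent vanishing sums of ($m_0$-th roots of unity)$\times$(fixed root). Each such piece has length in $W(m_0)$, so it suffices to treat squarefree $m$.

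For squarefree $m$, I would use induction on the number of primes. Write $m=p\,m'$ and express every root as $\zeta_p^a\eta$ with $\eta$ an $m'$-th root of unity. The minimal polynomial of $\zeta_p$ over $\mathbb{Q}(\zeta_{m'})$ is $1+x+\cdots+x^{p-1}$, because $\gcd(p,m')=1$. A vanishing sum $\sum_a \zeta_p^a\alpha_a=0$, with each $\alpha_a\in\mathbb{N}[\mu_{m'}]$, therefore forces $\alpha_0=\alpha_1=\cdots=\alpha_{p-1}=:\alpha$ as elements of $\mathbb{Z}[\zeta_{m'}]$. The main obstacle is turning this equality into a statement about the multiset sizes $n_a$, i.e.\ the numbers of terms in each $\alpha_a$. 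The differences $\alpha_a-\alpha_b$ are zero, and each difference is a signed sum of $m'$-th roots. Since $-1$ is itself an $m'$-th root of unity when $m'$ is even, and otherwise a sum of roots via $-\zeta=\sum_{\text{other } q\text{-th roots}}$, each such difference can be rewritten as a vanishing sum of positive type. By induction, its length lies in $W(m')$, with $n_a+n_b$ adjusted by multiples of the primes of $m'$.

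More carefully, I would run a counting argument. Let $n_{\min}=\min_a n_a$. The total length is then $p\,n_{\min}+\sum_a(n_a-n_{\min})$. Each excess $n_a-n_{\min}$ should be shown to lie in $\mathbb{N}p_2+\cdots+\mathbb{N}p_r$; this uses the fact that $\alpha_a-\alpha_{a_0}=0$, after cancelling common terms, is a vanishing combination of $m'$-th roots. I would expect the sign bookkeeping in that step to be the delicate part. Lam--Leung handle it by exploiting the structure of $\mathbb{Z}[\mu_{m'}]$ together with the special role of the smallest prime. If the direct induction gets stuck, I would fall back on their argument, or simply cite \cite{LaLe} for that step, since the present paper uses the result only as a black box.
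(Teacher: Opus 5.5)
The paper gives no proof of this proposition; it is quoted from \cite{LaLe}. So your sketch has to stand on its own. Your easy inclusion and your reduction to squarefree $m$ are correct. The squarefree step, however, has a genuine gap, and it is not a matter of sign bookkeeping: the claim that each excess $n_a-n_{\min}$ lies in $\mathbb{N}p_2+\cdots+\mathbb{N}p_r$ (even with $0$ allowed) is false. Take $m=30$ and split off $p=5$, so $m'=6$. Since $1+\zeta_6^2=\zeta_6$ and $\zeta_5+\zeta_5^2+\zeta_5^3+\zeta_5^4=-1$,
\[
1+\zeta_6^2+\zeta_6\zeta_5+\zeta_6\zeta_5^2+\zeta_6\zeta_5^3+\zeta_6\zeta_5^4=0 .
\]
The component sizes are $(n_0,\dots,n_4)=(2,1,1,1,1)$, so the excess is $1\notin\mathbb{N}2+\mathbb{N}3$. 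Splitting the same sum along $p=2$ or $p=3$ gives sizes $(2,4)$ and $(1,1,4)$, with excesses $2\notin\mathbb{N}3+\mathbb{N}5$ and $3\notin\mathbb{N}2+\mathbb{N}5$. So no choice of prime rescues the term-by-term accounting; only the total $6$ lies in the semigroup.

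The reason is already visible in your first attempt. Making $\alpha_a-\alpha_b=0$ positive, via $-\eta=\zeta_2\eta$ or $-\eta=\eta(\zeta_q+\cdots+\zeta_q^{q-1})$, produces a vanishing sum of length $n_a+n_b$, resp.\ $|B|+(q-1)|C|$. Recovering $n_a-n_b$ from that requires subtracting multiples of primes, which does not preserve membership in a numerical semigroup. Indeed, equal sums of $m'$-th roots can have lengths differing by $1$, as $1+\zeta_6^2=\zeta_6$ shows.

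Consequently the inclusion $W(m)\subseteq\mathbb{N}p_1+\cdots+\mathbb{N}p_r$ is left unproved. Falling back on \cite{LaLe} ``for that step'' is circular, since that step is the whole content of the proposition. What is needed is a global argument.

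For even squarefree $m$ one is available with your own tools:
\begin{enumerate}
\item Write a vanishing sum as $\sum_{X}\eta-\sum_{Y}\eta$, with $X,Y$ multisets of $(m/2)$-th roots.
\item Odd length forces $|X|\neq|Y|$, so $X-Y$ is a nonzero integer relation among $(m/2)$-th roots.
\item Your $\Phi_p$-splitting shows by induction that such a relation involves at least $q$ distinct roots, where $q$ is the smallest prime factor of $m/2$. If all components share a nonzero value, there are $p\ge q$ nonzero components; otherwise some component is itself a nonzero relation and you recurse. For $m=2$ no nonzero relation exists at all.
\item Hence an odd length is at least $q$, so it lies in $\mathbb{N}2+\mathbb{N}q$.
\end{enumerate}

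For odd $m$, linear bounds of this kind are not enough. For instance, for $m=p_1p_2p_3$ with $p_3>p_1p_2$, the semigroup has gaps up to $p_1p_2-p_1-p_2$. One needs something like a lower bound of order $(p_1-1)(p_2-1)$ on the length of every minimal vanishing sum that is not a rotated $1+\zeta_p+\cdots+\zeta_p^{p-1}$. Sylvester's theorem would then place such lengths in $\mathbb{N}p_1+\mathbb{N}p_2$. Nothing in your sketch produces a bound of that strength, and this is the genuinely nontrivial part of the theorem.
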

Translating this result into the language of commutators, we directly get an existential answer to the Question \ref{Main question_Section 2}.  
\begin{theorem}\label{Main theorem_existential}
Let $k$ be a positive integer and  $k=p_1^{\alpha_1}p_2^{\alpha_2}\cdots p_r^{\alpha_r}$ be its prime factorization.  Then $M_{k,n}$ is non-empty if and only if $n\in \mathbb{N}p_1+\mathbb{N}p_2+\cdots +\mathbb{N}p_r$.
\end{theorem}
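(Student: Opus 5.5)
The plan is to reduce the statement to Proposition \ref{Lam-Leung} (with $m=k$) via two standard facts: a matrix $C$ with $C^k=\Id_n$ is diagonalizable with eigenvalues in $\mathcal{C}^k$, and a matrix is a commutator in $M_n(\CC)$ exactly when its trace vanishes (cf. \cite{Sh}, \cite{AlMa}).

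\textbf{Necessity.} Suppose $(A,B)\in M_{k,n}$ and put $C=[A,B]$.
\begin{enumerate}[(i)]
\item Since $C^k=\Id_n$, the minimal polynomial of $C$ divides $x^k-1$. This polynomial has $k$ distinct roots over $\CC$, so $C$ is diagonalizable.
\item Each eigenvalue $\eta_1,\dots,\eta_n$ (listed with multiplicity) satisfies $\eta_i^k=1$, so $\eta_i\in\mathcal{C}^k$.
\item Since $C$ is a commutator, $\Tr(C)=\Tr(AB)-\Tr(BA)=0$, hence $\sum_{i=1}^n\eta_i=0$.
\end{enumerate}
Thus $n\in W(k)$, and Proposition \ref{Lam-Leung} gives $n\in\mathbb{N}p_1+\cdots+\mathbb{N}p_r$. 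Multiplicities are harmless here, because the weight set $W(k)$ allows repeated roots of unity.

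\textbf{Sufficiency.} Suppose $n\in\mathbb{N}p_1+\cdots+\mathbb{N}p_r$.
\begin{enumerate}[(i)]
\item By Proposition \ref{Lam-Leung}, there exist $\eta_1,\dots,\eta_n\in\mathcal{C}^k$ with $\sum_{i=1}^n\eta_i=0$.
\item Set $C=\diag(\eta_1,\dots,\eta_n)$. Then $C^k=\Id_n$ and $\Tr(C)=0$.
\item By Shoda's theorem (cf. \cite{Sh}, \cite{AlMa}), $C=[A,B]$ for some $A,B\in M_n(\CC)$, so $(A,B)\in M_{k,n}$.
\end{enumerate}
This can also be made explicit, as in Example \ref{Example 2_section 2}. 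Write $n=\sum_j c_jp_j$, and take blocks $\diag(1,\omega_j,\dots,\omega_j^{p_j-1})$, where $\omega_j$ is a primitive $p_j$-th root of unity (which lies in $\mathcal{C}^k$). Each block has trace $0$, so the block-diagonal matrix has trace $0$. This constructive route avoids needing the nontrivial direction of Lam--Leung for sufficiency.

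\textbf{Main obstacle.} The only substantive input is the necessity direction of Proposition \ref{Lam-Leung}, which we take as given. What remains to check is that the reduction is exact: the spectrum of $C$, counted with multiplicity, gives exactly $n$ roots of unity summing to zero. Diagonalizability ensures the trace is the sum of these eigenvalues. The edge cases ($n\ge 2$, and $0\notin$ the index set) match the convention $W(m)\subseteq\mathbb{Z}^+$.
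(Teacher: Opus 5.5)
Your argument is correct and uses the same reduction as the paper: the eigenvalues of $C=[A,B]$, counted with multiplicity, are $n$ elements of $\mathcal{C}^k$ summing to $\Tr(C)=0$, so necessity is Proposition \ref{Lam-Leung} and sufficiency follows from Shoda's theorem; your write-up is in fact more careful than the paper's, which phrases this via roots of the minimal polynomial. Two small remarks: diagonalizability is not needed, since over $\CC$ the trace is always the sum of the eigenvalues with algebraic multiplicity, and your block construction (extending Example \ref{Example 2_section 2}) simply proves the easy inclusion $\mathbb{N}p_1+\cdots+\mathbb{N}p_r\subseteq W(k)$ directly, which is the only direction of Lam--Leung that sufficiency ever needed, so it does not avoid the nontrivial direction as you claim.
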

\begin{proof}
For any matrix $C\in M_n(\mathbb{C})$, $C^k=\Id_n$ if and only if the minimal polynomial $p_C$ of $C$ is a factor of $x^k-1$.  Therefore, there exist $A,B\in M_n(\mathbb{C})$ with $C=[A,B]$ and $(A,B)\in M_{k,n}$ if and only if there exists $\zeta_i \in \mathcal{C}^k$ with $p_C(\zeta_i)=0$, $1\leq i \leq n$, such that $\sum_{i=1}^n \zeta_i =0$.  Equivalently, $n$ must lie in the weight set $W(k)$ of $k$.  That is, $n\in \mathbb{N}p_1+\mathbb{N}p_1+\cdots +\mathbb{N}p_r$ by Proposition \ref{Lam-Leung}.  
\end{proof}
\section{Commutators of finite multiplicative order in $M_n(S)$}
In this section,  we provide a sufficient condition for the existence of a commutator of finite multiplicative order in the matrix ring $M_n(S)$, the ring of all $n\times n$ matrices with entries from a unital ring $S$.

For an integer $k$, by $\overline{k}$ we denote the non-zero remainder when $k$ is divided by $n$.  Let $P,D \in M_n(S)$ be defined as follows :
\begin{equation}\label{definition of P and D}
P = \begin{pmatrix}
0      & 0      & 0      & \cdots & 1 \\
1      & 0      & 0      & \cdots & 0\\
0      & 1      & 0      &  \cdots & 0  \\
\vdots &        & \ddots & \ddots &  \vdots \\
0      & \cdots &        & 1      & 0
\end{pmatrix}
\;\;,\;\;
\quad
D = \begin{pmatrix} 
0 & 0 & 0 & \dots & 0 \\ 
0 & 1 & 0 & \dots & 0 \\ 
0 & 0 & 2 & \dots & 0 \\ 
\vdots & \vdots & \vdots & \ddots & \vdots \\ 
0 & 0 & 0 & \dots & n-1 
\end{pmatrix}\;.    
\end{equation}
\begin{lemma}\label{lem:PD}
 Let $P,D\in M_n(S)$ be as in \eqref{definition of P and D}.  Then, 
 \begin{equation*}
   [D,P]^n = (1-n)\;\Id_n \;. 
 \end{equation*}
\end{lemma}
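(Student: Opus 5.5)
Let $e_1,\dots,e_n$ denote the standard basis columns, so that $P e_j = e_{j+1}$ (indices mod $n$) and $D e_j = (j-1)e_j$; equivalently $P=\sum_{j} E_{j+1,j}$ and $D=\sum_j (j-1)E_{jj}$, with indices read mod $n$. Since the entries of $P$ and $D$ lie in the prime subring generated by $1$, which is central in $S$, all the computations below take place over that central subring. The plan is to compute $[D,P]$ explicitly, recognise it as $P$ times a diagonal matrix, and then raise it to the $n$-th power.

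\textbf{Step 1.} Compute $DP$ and $PD$ entrywise. We have $DP = \sum_j j\,E_{j+1,j}$, where the coefficient $j$ is the $D$-weight of row $j+1$; for $j=n$ this row is $1$, so the coefficient there is $0$. On the other hand $PD=\sum_j (j-1)E_{j+1,j}$. Hence
\[
[D,P] = \sum_{j=1}^{n-1} E_{j+1,j} + (0-(n-1))E_{1,n} = P\Lambda,
\]
where $\Lambda=\diag(1,1,\dots,1,1-n)$ acts on columns. In other words, $[D,P]$ is the cyclic shift with weights $1,\dots,1,1-n$, one weight attached to each arrow $e_j\mapsto e_{j+1}$.

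\textbf{Step 2.} Raise this weighted cyclic shift to the $n$-th power. Following any basis vector $e_j$ through $n$ applications of $[D,P]$, the vector travels once around the cycle and returns to $e_j$. Along the way it picks up the product of all $n$ edge weights, which is $1^{\,n-1}(1-n)=1-n$, and this holds independently of the starting point $j$. Formally, one can write $(P\Lambda)^n = P\Lambda P^{-1}\cdot P^2\Lambda P^{-2}\cdots P^n\Lambda P^{-n}\cdot P^n$. Each conjugate $P^i\Lambda P^{-i}$ is diagonal, being a cyclic permutation of $\Lambda$, and $P^n=\Id_n$. Diagonal matrices with central entries commute, so the product of these conjugates is $\diag$ with every entry equal to the product of all entries of $\Lambda$, namely $(1-n)$. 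This gives $[D,P]^n=(1-n)\Id_n$.

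\textbf{Main obstacle.} The only delicate point is the boundary index in Step 1: verifying that the wrap-around entry is $0-(n-1)=1-n$ and not $+ (n-1)$. This requires fixing the convention that $P$ sends $e_n$ to $e_1$ with $DP$ seeing the weight $0$ of row $1$. I would double-check this against the $n=2$ case, where $[D,P]=\begin{pmatrix}0&-1\\1&0\end{pmatrix}$, whose square is $-\Id_2=(1-2)\Id_2$, as required.
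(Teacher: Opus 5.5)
Your proposal is correct and follows essentially the same route as the paper: you identify $[D,P]=P\Lambda$ with $\Lambda=\diag(1,\dots,1,1-n)$, then factor $(P\Lambda)^n$ as the product of the conjugates $P^k\Lambda P^{-k}$ followed by $P^n=\Id_n$, exactly as the paper does with its matrices $\Delta^{(k)}$. Your remark that all entries lie in the central prime subring, so the diagonal factors commute, makes explicit a point the paper leaves implicit.
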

 \begin{proof}
The $(i,j)$-th entries of $D$ and $P$ are as follows :
\begin{equation}\label{entrywise definition of D and P}
  (D)_{ij}=\begin{cases}
    i-1\;\;\;\text{if\;} i=j\\
    0\;\;\;\;\;\;\;\text{if}\; i\neq j\end{cases}\;,\;
(P)_{ij}=\begin{cases}
    1\;\;\;\text{if\;} i=\overline{j+1}\\
    0\;\;\;\text{if}\; i\neq \overline{j+1}\end{cases}\;.  
\end{equation}
Therefore, for $\Delta=\mathrm{diag}(1,\ldots,1,1-n)$, we have:
\begin{equation}\label{[D,P] is P times Delta}
\begin{split}
    ([D,P])_{ij}=(DP)&_{ij}-(PD)_{ij}=\sum_{k=1}^n D_{ik}P_{kj}-\sum_{k=1}^n P_{ik}D_{kj}\\&=\begin{cases}
        (i-1)-(j-1) \;\;\text{if\;} i=\overline{j+1}\\
        0\;\;\;\;\;\;\;\;\;\;\;\;\;\;\;\;\;\;\;\;\;\;\;\;\;\text{if\;}i\neq\overline{j+1}
    \end{cases}\\
    &= \begin{cases}
        1\;\;\;\;\;\;\;\;\;\text{if\;} i=\overline{j+1} \text{\;and\;} j = 1,\ldots,n-1\\
        1-n\;\;\text{if\;} i=\overline{j+1} \text{\;and\;} j=n \\0\;\;\;\;\;\;\;\;\;\text{if\;}i\neq\overline{j+1}
    \end{cases}=(P\Delta)_{ij}\;.
\end{split}
\end{equation}
By \eqref{entrywise definition of D and P}, it is easy to see that for each integer $k$, 
\begin{equation}\label{P power k}
    (P^k)_{ij}=\begin{cases}
    1\;\;\;\text{if\;} i=\overline{j+k}\\
    0\;\;\;\text{if}\; i\neq \overline{j+k}\end{cases}\,.
\end{equation}
Therefore, by \eqref{P power k},
\begin{equation}\label{P power k times Delta}
(P^k\Delta)_{ij}=\sum_{\ell=1}^n (P^k)_{i\ell}\;\Delta_{\ell j}
    = \begin{cases}
        \Delta_{jj} \;\;\text{if\;}i=\overline{j+k}\\
        0\;\;\;\;\;\;\text{if\;}i\neq\overline{j+k}
    \end{cases}\,.   
\end{equation}
Therefore, for $\Delta^{(k)} := P^k \Delta P^{-k}$, by \eqref{P power k times Delta}, 
\begin{equation}\label{entrywise Delta power (k)}
\begin{split}
(\Delta^{(k)})_{ij}&=\sum_{\ell=1}^n (P^k\Delta)_{i\ell}\;(P^{-k})_{\ell j}\\
    &= \begin{cases}
        {\Delta}_{\,\overline{j-k}\;\;\,\overline{j-k} } \;\;\text{if\;}i=\overline{\overline{j-k}+k}\\
        0\;\;\;\;\;\;\;\;\;\;\;\;\;\;\;\text{if\;}i\neq\overline{\overline{j-k}+k}
    \end{cases}
    =\begin{cases}
    {\Delta}_{\,\overline{i-k}\;\;\,\overline{i-k} }\;\;\;\; \text{if\;}i= j\\
    0\;\;\;\;\;\;\;\;\;\;\;\;\;\;\;\;\text{if\;}i\neq j
\end{cases}\;.
\end{split}
\end{equation}
As $P \Delta^{(k)} = \Delta^{(k+1)} P$, recursively we get:
\begin{equation}\label{P Delta whole power n}
    (P\Delta)^n = \Delta^{(1)} \Delta^{(2)} \cdots \Delta^{(n)}  P^n.
\end{equation}
As, by \eqref{entrywise Delta power (k)}, we have 
\begin{equation*}
    \big(\Delta^{(1)} \cdots \Delta^{(n)}\big)_{ij}
= \begin{cases}
    \prod_{k=1}^n {\Delta}_{\,\overline{i-k}\;\overline{i-k}}\;\;\text{if\;}i= j\\
    0\;\;\;\;\;\;\;\;\;\;\;\;\;\;\;\;\;\;\;\;\;\;\text{if\;}i\neq j
\end{cases}\;=\big((1-n)\;\Id_n\big)_{ij}\;,
\end{equation*}
and as $P^n=\Id_n$ (cf. \eqref{P power k}), by \eqref{[D,P] is P times Delta} and \eqref{P Delta whole power n}, the assertion follows.
\end{proof}

Lemma \ref{lem:PD} shows that the equality $[D,P]^n=\Id_n$ does not hold in an arbitrary $M_n(S)$.  We now observe under what circumstances the above equality can be satisfied, even by some generalised analogues of $D$ and $P$.  Denoting the set of units of $S$ and the center of $S$ of any unital ring $S$ by $U(S)$ and $Z(S)$ respectively, we have the following. 
\begin{theorem}\label{thm:main}
  Let $S$ be a unital ring and $n\geq 2$.  Suppose
  \begin{equation}\label{eq:cond}
    1 \;=\; v_1+v_2+\cdots+v_{n-1},
    \qquad v_1,\ldots,v_{n-1}\in U(Z(S)).
  \end{equation}
  Then there exist $A,B\in M_n(S)$ with $[A,B]^n=\Id_n$.
\end{theorem}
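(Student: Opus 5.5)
The plan is to generalise the pair $(D,P)$ of Lemma~\ref{lem:PD}. We replace the integer diagonal of $D$ by partial sums of the $v_i$, and we twist the cyclic permutation $P$ by a suitable central unit. That twist absorbs the product that appears when $[D,P]$ is raised to the $n$-th power.

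\textbf{Construction.} Set $d_1=0$ and $d_{j+1}=v_1+\cdots+v_j$ for $1\le j\le n-1$. By \eqref{eq:cond} we have $d_n=1$. All $d_j$ lie in $Z(S)$. Put $A=\diag(d_1,\dots,d_n)$. Define the successive differences, with indices read cyclically:
\[
c_j:=d_{\overline{j+1}}-d_j .
\]
Then $c_j=v_j$ for $1\le j\le n-1$, and $c_n=d_1-d_n=-1$. So every $c_j$ is a central unit, and their product is $u:=-v_1v_2\cdots v_{n-1}\in U(Z(S))$. Let $w:=u^{-1}$. Define $B=P_w$ to be the matrix $P$ of \eqref{definition of P and D} with its $(1,n)$ entry $1$ replaced by $w$. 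Equivalently, $P_w=P\,\diag(1,\dots,1,w)$.

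\textbf{Computing the commutator.} Exactly as in \eqref{[D,P] is P times Delta}, the entrywise computation gives $([A,P_w])_{ij}=(d_i-d_j)(P_w)_{ij}$. This uses only that the $d_j$ are central. It follows that $[A,P_w]=P\Delta$ with
\[
\Delta=\diag(c_1,\dots,c_{n-1},\,w\,c_n).
\]
We then repeat the argument of Lemma~\ref{lem:PD} verbatim. Conjugating the diagonal matrix $\Delta$ by powers of $P$ cyclically permutes its diagonal entries, as in \eqref{entrywise Delta power (k)}. Hence
\[
(P\Delta)^n=\Delta^{(1)}\cdots\Delta^{(n)}P^n .
\]
Each diagonal entry of $\Delta^{(1)}\cdots\Delta^{(n)}$ is the product of all diagonal entries of $\Delta$, namely $w\,c_1\cdots c_n=wu=1$. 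Because $P^n=\Id_n$, we conclude $[A,B]^n=\Id_n$.

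\textbf{Main obstacle.} The only delicate point is justifying that the product in \eqref{P Delta whole power n} can be reordered so that every diagonal entry equals the same product $w\prod_j c_j$, regardless of the cyclic order in which the factors appear. This is exactly where centrality of the $v_i$, and hence of the $c_j$ and of $w$, is used. Invertibility of the $v_i$ is needed only to make $u$ invertible, so that the twist $w$ exists.
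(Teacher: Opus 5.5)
Your proof is correct, but it places the normalisation differently from the paper.

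The paper sets $u_0=1$ and $u_k=-v_k$, and takes $A=\diag(s_0,\dots,s_{n-1})$ with $s_k=u_0+\cdots+u_{k-1}$. It then puts the inverse of \emph{every} cyclic difference $u_j$ into the corresponding entry of the weighted cycle $B$. Each nonzero entry of $[A,B]$ is then $u_ju_j^{-1}=1$, so $[A,B]=P$ exactly, and the conclusion is just $P^n=\Id_n$. No powers of a weighted cycle need to be computed.

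You instead keep $B$ equal to $P$ except in the $(1,n)$ corner, and accept a non-scalar commutator $[A,B]=P\Delta$. You then reuse the computation of Lemma~\ref{lem:PD} to see that $(P\Delta)^n$ is the scalar $\prod_j\Delta_{jj}$, which the single twist $w$ makes equal to $1$. As a result you also use centrality to reorder the cyclic products of the $c_j$ and $w$. The paper needs centrality only to factor the difference of diagonal entries out of each commutator entry.

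The hypotheses enter equivalently. You need $\prod_j c_j$ to be a unit, which in the commutative ring $Z(S)$ is the same as each $c_j$ being a unit. That is exactly what the paper needs in order to invert each $u_j$.

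Your route isolates the product of the cyclic differences of the diagonal of $A$ as the only obstruction (it is $1-n$ in Lemma~\ref{lem:PD}), and shows that one corner twist removes it. The paper's route is shorter and gives the cleaner normal form $[A,B]=P$.
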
  
\begin{proof}
We set $u_0=1$ and $u_k=-v_k$ for $1\leq k \leq n-1$.  Then, $u_k\in U(Z(S))$. Further, we set $s_0=0$ and $s_k=\sum_{j=0}^{k-1} u_j$, $1\leq k \leq n-1$.   We now choose $A,B\in M_n(S)$ as follows :
  \[
    A := \mathrm{diag}(s_0,\ldots,s_{n-1})
       = \begin{pmatrix}s_0&&\\&\ddots&\\&&s_{n-1}\end{pmatrix},
    \,
    B := \sum_{k=1}^{n}u_{k-1}^{-1}E_{\overline{k+1},\,k}
       = \begin{pmatrix}0&\cdots &0&u_{n-1}^{-1}\\u_0^{-1}&\cdots&0&0\\\vdots&\ddots&\vdots&\vdots\\0&\cdots &u_{n-2}^{-1}&0\end{pmatrix}.
  \]
That is, $(i,j)$-th entries of $A$ and $B$ are as follows :
$$(A)_{ij}=\begin{cases}
    s_{i-1}\;\;\;\text{if\;} i=j\\
    0\;\;\;\;\;\;\;\text{if}\; i\neq j\end{cases}\;,\;
(B)_{ij}=\begin{cases}
    u_j^{-1}\;\;\;\text{if\;} i=\overline{j+1}\\
    0\;\;\;\;\;\;\;\text{if}\; i\neq \overline{j+1}\end{cases}\;.$$
Therefore,
\begin{equation*}
\begin{split}
    ([A,B])_{ij}=(AB)&_{ij}-(BA)_{ij}=\sum_{k=1}^n A_{ik}B_{kj}-\sum_{k=1}^n B_{ik}A_{kj}\\&=\begin{cases}
        s_{\overline{j}}u_j^{-1}-u_j^{-1}s_{\overline{j-1}} \;\;\text{if\;} i=\overline{j+1}\\
        0\;\;\;\;\;\;\;\;\;\;\;\;\;\;\;\;\;\;\;\;\;\;\;\;\;\text{if\;}i\neq\overline{j+1}
    \end{cases}\\
    &= \begin{cases}
        1\;\;\text{if\;} i=\overline{j+1} \;\;(\text{as\;}u_j\in Z(S) \text{\;and\;}s_{\overline{j}}-s_{\overline{j-1}}=u_j) \\
        0\;\;\text{if\;}i\neq\overline{j+1}
    \end{cases}=(P)_{ij} \;(\text{by\;} \eqref{entrywise definition of D and P})\;.
\end{split}
\end{equation*}
  The assertion now follows from \eqref{P power k}.
\end{proof} 
\begin{corollary}\label{cor:n2}
  For any unital ring $S$, there exist $A,B\in M_2(S)$ with $[A,B]^2=\Id_2$.
\end{corollary}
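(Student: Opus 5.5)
This corollary is the case $n=2$ of Theorem \ref{thm:main}, and the plan is to check that its hypothesis \eqref{eq:cond} always holds when $n=2$. For $n=2$ the condition asks that $1$ be a sum of $n-1=1$ central units. The choice $v_1=1$ works for every unital ring $S$, since $1\in Z(S)$ and $1$ is invertible, so $v_1\in U(Z(S))$. Theorem \ref{thm:main} then gives $A,B\in M_2(S)$ with $[A,B]^2=\Id_2$.

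For concreteness, and as a check, I would also write out the matrices the construction in the proof of Theorem \ref{thm:main} produces. With $u_0=1$ and $u_1=-v_1=-1$ we get $s_0=0$ and $s_1=u_0=1$. This gives $A=\diag(0,1)$ and $B=\begin{pmatrix}0&-1\\1&0\end{pmatrix}$. A direct computation yields $AB=\begin{pmatrix}0&0\\1&0\end{pmatrix}$ and $BA=\begin{pmatrix}0&-1\\0&0\end{pmatrix}$, so $[A,B]=\begin{pmatrix}0&1\\1&0\end{pmatrix}=P$. Its square is $\Id_2$ over any ring, because only the integers $0,\pm1$ appear.

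There is essentially no obstacle here. The only point to watch is that the entries are the images of $0,\pm1$ in $S$, which are central, so the computation does not depend on commutativity and works even in characteristic $2$. There, $1-n=-1=1$, which is consistent with Lemma \ref{lem:PD}.
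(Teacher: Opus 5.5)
Your proof is correct and is exactly the paper's argument: for $n=2$ the hypothesis \eqref{eq:cond} of Theorem~\ref{thm:main} holds trivially with $v_1=1$. Your explicit check that the construction gives $A=\diag(0,1)$, $B=\begin{pmatrix}0&-1\\1&0\end{pmatrix}$ and $[A,B]=P$ with $P^2=\Id_2$ is also accurate, though not needed.
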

 \begin{proof}
  The condition~\eqref{eq:cond} becomes a tautology by taking $v_1=1$.
\end{proof}
\begin{corollary}\label{cor:n3}
  Let $S$ be a unital ring.  If $1\in U(Z(S))+U(Z(S))$, i.e., there
  exists $u\in U(Z(S))$ with $1-u\in U(Z(S))$, then there exist
  $A,B\in M_3(S)$ with $[A,B]^3=\Id_3$.
\end{corollary}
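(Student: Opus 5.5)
This is the case $n=3$ of Theorem \ref{thm:main}, so the plan is simply to check that the hypothesis of the corollary is exactly condition \eqref{eq:cond} for $n=3$. When $n=3$, condition \eqref{eq:cond} asks for $v_1,v_2\in U(Z(S))$ with $1=v_1+v_2$. The corollary phrases its hypothesis in two equivalent ways, and I would first record why they agree.

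If $1=v_1+v_2$ with $v_1,v_2\in U(Z(S))$, set $u=v_1$; then $1-u=v_2\in U(Z(S))$. Conversely, given $u\in U(Z(S))$ with $1-u\in U(Z(S))$, set $v_1=u$ and $v_2=1-u$. Both are central units and they sum to $1$. So the hypothesis of the corollary is precisely \eqref{eq:cond} with $n=3$.

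Applying Theorem \ref{thm:main} with $n=3$ then produces $A,B\in M_3(S)$ with $[A,B]^3=\Id_3$. To make the construction explicit, I would follow the proof of Theorem \ref{thm:main}:
\[
u_0=1,\qquad u_1=-u,\qquad u_2=u-1,
\]
\[
s_0=0,\qquad s_1=1,\qquad s_2=1-u,
\]
\[
A=\diag(0,1,1-u),\qquad B=E_{2,1}+u_1^{-1}E_{3,2}+u_2^{-1}E_{1,3}.
\]
This gives $[A,B]=P$ with $P^3=\Id_3$.

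There is no real obstacle here. The only point needing care is that $u_1$ and $u_2$ must be central units, and this holds because $u$ and $1-u$ are central units and negation preserves both properties.
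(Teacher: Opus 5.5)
Your proof is correct and is the same as the paper's: take $v_1=u$, $v_2=1-u$ in \eqref{eq:cond} and apply Theorem \ref{thm:main} with $n=3$. Your explicit matrices $A=\diag(0,1,1-u)$ and $B=E_{2,1}+(-u)^{-1}E_{3,2}+(u-1)^{-1}E_{1,3}$ check out directly, giving $[A,B]=E_{2,1}+E_{3,2}+E_{1,3}=P$ and hence $[A,B]^3=\Id_3$.
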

 \begin{proof}
 The condition~\eqref{eq:cond} is satisfied by taking $v_1=u$ and $v_2=1-u$. 
\end{proof}
 \begin{corollary}\label{cor:general}
  Let $S$ be a unital ring and $n\geq 2$.  Each of the following
  conditions imply that there exist $A,B\in M_n(S)$ with $[A,B]^n=\Id_n$:
  \begin{enumerate}
    \item  $n-1\in U(Z(S))$.
    \item $\mathrm{char}(S)$ divides $(n-2)$.
  \end{enumerate}
\end{corollary}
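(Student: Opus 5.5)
Both conditions will be handled by reducing to Theorem \ref{thm:main}: it suffices to write $1=v_1+\cdots+v_{n-1}$ with every $v_i\in U(Z(S))$. The case $n=2$ is Corollary \ref{cor:n2}, so assume $n\ge 3$.

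For condition (1), let $w=(n-1)^{-1}\in U(Z(S))$ and take $v_1=\cdots=v_{n-1}=w$. Then $\sum_{i=1}^{n-1} v_i=(n-1)w=1$, and Theorem \ref{thm:main} gives $A,B$ with $[A,B]^n=\Id_n$. Equivalently, Lemma \ref{lem:PD} gives $[D,P]^n=(1-n)\Id_n$, and one can rescale one factor by a suitable central unit. The theorem already covers this, so no extra work is needed.

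For condition (2), suppose $\mathrm{char}(S)=c$ divides $n-2$. Then $n\cdot 1=2\cdot 1$ in $S$, so $n-1=1$ in $S$. (If $c=0$, the hypothesis forces $n=2$, which is already handled.) Hence $n-1=1\in U(Z(S))$, and (2) reduces to (1).

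Alternatively, take $v_1=1$ together with alternating $\pm1$ terms: $1+(1-1)+\cdots$. This needs $n-2$ even, which is a different condition, so the reduction through (1) is the right route.

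The only point needing care is interpreting ``$\mathrm{char}(S)$ divides $n-2$'' when the characteristic is $0$. I will read it as forcing $n=2$, since $0\mid m$ means $m=0$. That case is Corollary \ref{cor:n2}.

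Nothing here is a genuine obstacle. The only thing to verify is that integer multiples of $1$ lie in $Z(S)$, so that their inverses, when they exist, are central units. This holds because integer multiples of $1$ commute with every element of $S$, and the inverse of a central unit is again central.
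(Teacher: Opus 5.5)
Your proof is correct and matches the paper's: for (1) the paper also takes $v_k=(n-1)^{-1}$, and your reduction of (2) to (1) via $(n-1)\cdot 1_S=1_S$ gives $w=1$, which is exactly the paper's choice $v_k=1$. One aside is wrong, though you never use it: rescaling by a central unit $c$ gives $[cD,P]^n=c^n(1-n)\Id_n$, which equals $\Id_n$ only if $(1-n)^{-1}$ has an $n$-th root in $Z(S)$, so Lemma~\ref{lem:PD} does not yield (1) this way.
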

\begin{proof}
    \begin{enumerate}
        \item The condition~\eqref{eq:cond} is satisfied by taking $v_k=(n-1)^{-1}$ for all $1\leq k \leq n-1$.
        \item The condition~\eqref{eq:cond} is satisfied by taking $v_k=1$ for all $1\leq k \leq n-1$.
    \end{enumerate}
\end{proof}
\begin{remark}
   The hypotheses of Corollary \ref{cor:n2}, Corollary \ref{cor:n3} and the first part of Corollary \ref{cor:general} are automatically true for $S=\mathbb{C}$ and therefore the conclusions hold.  In fact, for any $n\geq 2$, there exists $A,B\in M_n(\mathbb{C})$ satisfying $[A,B]^n=\Id_n$ by Theorem \ref{Main theorem_existential}. 
\end{remark}
\section{Structure of rings having a commutator of finite multiplicative order}
 
In this section, we investigate whether the existence of $a,b$ in a unital ring $R$, with $[a,b]^n=1$, gives us some structural results on $R$. Under some mild conditions on $R$, we are able to extract orthogonal idempotents whose sum is $1$. If these idempotents are equivalent (cf. Definition \ref{defn_cyclic equivalence}), then we prove that the ring $R$ is isomorhic to $M_n(S)$. We provide an equivalent condition for these idempotents to be equivalent. 

Let $\omega$ denote the $n$-th root of unity.  To prove the results in this section, we assume that $n \in U(R)$, $\omega\in U(Z(R))$ and $\omega^i-\omega^j \in U(R)$. An \textit{idempotent} $e \in R$ is an element such that $e^2=e$. Two idempotents $e_1$ and $e_2$ are said to be \textit{orthogonal} if $e_1e_2=e_2e_1=0$.

\begin{lemma} \label{lem: Creating_idempotents}
    Let $R$ be a unital ring with $n \in U(R)$, $\omega \in U(Z(R))$ and $\omega^i-\omega^j \in U(R)$. Suppose there exist $a,b \in R$ with $u:=[a,b]$ and $u^n=1$.

    Then there exist $e_0,e_1,\dots,e_{n-1}$ such that
        \begin{enumerate}
            \item $e_k^2=e_k$, $e_ke_{\ell}=0$ if $k \neq \ell$.
            \item $\sum_{k=0}^{n-1} e_k=1$.
            \item $u=\sum_{k=0}^{n-1} \omega^ke_k$.
        \end{enumerate}
    \end{lemma}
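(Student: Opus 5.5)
Since $u^n=1$ and $\omega$ is a central primitive $n$-th root of unity (so that $\omega^i-\omega^j\in U(R)$ for $i\not\equiv j \pmod n$), the natural plan is the discrete Fourier / Lagrange interpolation construction of spectral idempotents. For $0\le k\le n-1$ we define
\[
e_k \;:=\; n^{-1}\sum_{j=0}^{n-1} \omega^{-jk}u^j ,
\]
which makes sense because $n\in U(R)$ and $\omega\in U(Z(R))$. All $e_k$ lie in the commutative subring generated by $u$ and the central elements $\omega^{\pm1}$, $n^{-1}$. Hence all manipulations can be done as polynomial identities in $u$ with central coefficients.

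\textbf{Step 1: the key auxiliary fact.} The first step is to establish the orthogonality relation
\[
\sum_{j=0}^{n-1}\omega^{jm}=\begin{cases} n & m\equiv 0 \pmod n,\\ 0 & \text{otherwise}.\end{cases}
\]
For $m\not\equiv 0$, put $\zeta=\omega^m$ and $s=\sum_j\zeta^j$. Then $(1-\zeta)s=1-\zeta^n=0$. Since $1-\zeta=\omega^0-\omega^m\in U(R)$, we get $s=0$. This is exactly where the hypothesis $\omega^i-\omega^j\in U(R)$ is used, and I expect it to be the only delicate point. In a general ring one cannot simply argue via ``roots of unity summing to zero'' as over $\CC$, so the invertibility of $1-\omega^m$ must be invoked explicitly. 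One also needs $\omega^n=1$, which is implicit in $\omega$ being an $n$-th root of unity.

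\textbf{Step 2: the three properties.}

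(2) Summing over $k$ gives
\[
\sum_k e_k=n^{-1}\sum_j\Big(\sum_k\omega^{-jk}\Big)u^j=n^{-1}\cdot n\cdot u^0=1 .
\]

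(3) We compute
\[
\sum_k\omega^k e_k=n^{-1}\sum_j\Big(\sum_k\omega^{k(1-j)}\Big)u^j=u ,
\]
since only $j=1$ survives.

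(1) First, a direct check gives $u e_k=\omega^k e_k$: reindexing $j\mapsto j+1$ and using $u^n=1$, $\omega^{-nk}=1$, we get
\[
ue_k=n^{-1}\sum_j\omega^{-jk}u^{j+1}=\omega^{k}e_k .
\]
Hence $u^je_k=\omega^{jk}e_k$, and so
\[
e_\ell e_k=n^{-1}\sum_j\omega^{-j\ell}\omega^{jk}e_k=n^{-1}\Big(\sum_j\omega^{j(k-\ell)}\Big)e_k ,
\]
which equals $e_k$ if $k=\ell$ and $0$ otherwise, by Step 1. This gives idempotence and orthogonality (note that $e_ke_\ell=e_\ell e_k$ by commutativity).

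Thus the whole proof reduces to routine bookkeeping once Step 1 is in hand. Note that the elements $a,b$ play no role beyond supplying a unit $u$ with $u^n=1$.
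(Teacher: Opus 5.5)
Your proposal is correct and follows the paper's proof. It uses the same idempotents $e_k=\frac{1}{n}\sum_{j}\omega^{-kj}u^j$, and proves (2) and (3) by the same geometric-sum argument via invertibility of $1-\omega^{m}$; the only difference is in (1), where you derive $ue_k=\omega^k e_k$ to collapse $e_\ell e_k$, whereas the paper expands $e_ke_\ell$ as a double sum reindexed by $t\equiv r+s \pmod n$.
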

    \begin{proof}
        \begin{enumerate}
            \item Define $e_k:=\frac{1}{n}\sum_{j=0}^{n-1} \omega^{-kj}u^j$. 
            \begin{align*}
                e_ke_{\ell} &=\frac{1}{n^2}\sum_{r,s=0}^{n-1} \omega^{-kr-\ell s} u^{r+s}\\
                &=\frac{1}{n^2}\sum_{t=0}^{n-1}\Big(\sum_{r=0}^{n-1} \omega^{-kr-\ell(t-r)} \Big) u^t~~~ (\text{setting } t=(r+s) \mod n) \;.
            \end{align*}
            Note that
            \begin{align*}
                \sum_{r=0}^{n-1} \omega^{-(k-\ell)r} &=\begin{cases}
                    n & \text{if\;}k=l\\
                    0 & \text{if\;}k \neq l
                \end{cases}\;.
            \end{align*}
            Thus, if $k \neq \ell$, $e_ke_{\ell}=0$. If $k=\ell$, 
            \[
                e_k^2
                =\frac{1}{n^2}\sum_{t=0}^{n-1} n\omega^{-kt} u^t
                =\frac{1}{n} \sum_{t=0}^{n-1} \omega^{-kt}u^t
                =e_k.
            \]
            \item
            \[
                \sum_{k=0}^{n-1} e_k
                =\sum_{k=0}^{n-1}\Big( \frac{1}{n} \sum_{j=0}^{n-1} \omega^{-kj} u^j\Big)
                =\frac{1}{n} \sum_{j=0}^{n-1}\Big( \sum_{k=0}^{n-1}\omega^{-kj}\Big) u^j\;.
            \]
Set $S_j:=\sum_{k=0}^{n-1} \omega^{-kj}$. So, $S_0=n$. If $j \neq 0$, then
            $S_j=\frac{1-(\omega^{-j})^n}{1-\omega^{-j}}=0$.
\[
                \sum_{k=0}^{n-1} e_k=\frac{1}{n} \sum_{j=0}^{n-1} S_ju^j
                =\frac{1}{n} nu^0
                =1.
            \]
\item 
            \[\sum_{k=0}^{n-1} \omega^k e_k =\sum_{k=0}^{n-1}\omega^k \Big(\frac{1}{n} \sum_{j=0}^{n-1} \omega^{-kj}u^j\Big)
            =\frac{1}{n} \sum_{j=0}^{n-1} \sum_{k=0}^{n-1} \omega^{k(1-j)}u^j.\]
Set $T_j:=\sum_{k=0}^{n-1}\omega^{k(1-j)}$. If $j=1$, $T_1=n$. If $j \neq 1$, then $T_j=\frac{1-(\omega^{1-j})^n}{1-\omega^{1-j}}=0$.
            \[
                \sum_{k=0}^{n-1} \omega^k e_k =\frac{1}{n} \sum_{j=0}^{n-1} T_j u^j
                =\frac{1}{n}T_1u^1
                =\frac{1}{n}nu
                =u.
            \]
    \end{enumerate}
\end{proof}

Let $e$ be an idempotent in $R$. Then there exists a \textit{Pierce decomposition} of the ring $R=eRe \oplus eR(1-e) \oplus (1-e)Re \oplus (1-e)R(1-e)$. For $n$ orthogonal idempotents $e_0,e_1,\dots,e_{n-1}$ with $\sum_{i=0}^{n-1} e_i=1$, the Pierce decomposition is $R =\oplus_{i,j=1}^n e_iRe_j$. Note that all of these are left ideals of $R$. For a detailed discussion about Pierce decomposition, see \cite[\S 21]{Lam}. 
\begin{definition}\label{defn_cyclic equivalence}
    A set of orthogonal idempotents $e_0,e_1,\dots,e_{n-1}$ of a ring $R$ are called \textit{cyclically equivalent} if  for all $1 \leq i \leq n-1$, with $e_n=e_0$, there exist $x_i \in e_iRe_{i+1}$ and $y_i \in e_{i+1}Re_i$ such that
    \begin{align*}
        x_iy_i=e_i \;,\; y_ix_i=e_{i+1}\;.
    \end{align*}
\end{definition}

Throughout the article, we write necessary expression in terms of $e_n$, where $e_n=e_0$.

\begin{proposition} \label{prop:cyclically equivaence}
    Let $R$ be a unital ring with $n \in U(R)$, $\omega \in U(Z(R))$ and $\omega^i-\omega^j \in U(R)$. Suppose there exist $a,b \in R$ with $u:=[a,b]$ and $u^n=1$.

    Let $e_0,e_1,\dots,e_{n-1}$ be idempotents as mentioned in Lemma \ref{lem: Creating_idempotents}. Then the following are equivalent:
    \begin{enumerate}[(i)]
        \item The idempotents $e_0,e_1,\dots,e_{n-1}$ are cyclically equivalent.
        \item There exists $v \in U(R)$ such that $vuv^{-1}=\omega^{-1}u$.
    \end{enumerate}
\end{proposition}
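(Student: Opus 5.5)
Both directions rest on the spectral description $u=\sum_{k=0}^{n-1}\omega^k e_k$ from Lemma \ref{lem: Creating_idempotents}, together with the observation that conjugation by a unit permutes this decomposition.

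The key preliminary fact is the following. For a unit $v$, put $u' = vuv^{-1} = \sum_k \omega^k\, ve_kv^{-1}$. The $ve_kv^{-1}$ are again orthogonal idempotents summing to $1$. Each $e_k$ is a polynomial in $u$ with central coefficients, namely $e_k=\frac1n\sum_j\omega^{-kj}u^j$. Hence the idempotents that Lemma \ref{lem: Creating_idempotents} attaches to $u'$ are exactly $ve_kv^{-1}$. On the other hand, the idempotents attached to $\omega^{-1}u$ are computed directly:
\[
\frac1n\sum_j\omega^{-kj}\omega^{-j}u^j=\frac1n\sum_j \omega^{-(k+1)j}u^j=e_{k+1},
\]
with indices taken mod $n$.

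\textbf{(ii)$\Rightarrow$(i).} Assume $vuv^{-1}=\omega^{-1}u$. By the fact above, $ve_kv^{-1}=e_{k+1}$ for every $k$. Set
\[
x_k := e_k v^{-1} e_{k+1}=v^{-1}e_{k+1}=e_kv^{-1}, \qquad y_k := e_{k+1} v e_k = ve_k = e_{k+1}v .
\]
Then $x_k\in e_kRe_{k+1}$ and $y_k\in e_{k+1}Re_k$. Moreover
\[
x_ky_k=v^{-1}e_{k+1}\,e_{k+1}v=v^{-1}e_{k+1}v=e_k, \qquad y_kx_k=ve_k\,e_kv^{-1}=e_{k+1}.
\]
This holds for all $k$, including the wraparound pair $e_{n-1},e_n=e_0$. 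So the idempotents are cyclically equivalent.

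\textbf{(i)$\Rightarrow$(ii).} Assume we are given $x_k\in e_kRe_{k+1}$ and $y_k\in e_{k+1}Re_k$ with $x_ky_k=e_k$ and $y_kx_k=e_{k+1}$ for all $k$ mod $n$. We read the definition as including the pair $(e_0,e_1)$, i.e.\ $0\le i\le n-1$; this is needed so that every $e_k$ is linked to $e_{k+1}$. Define
\[
v:=\sum_k y_k, \qquad w:=\sum_k x_k .
\]
By the Peirce orthogonality relations, $y_kx_\ell=0$ unless $k=\ell$, since $x_\ell\in e_\ell R$ while $y_k\in Re_k$. Therefore
\[
vw=\sum_k y_kx_k=\sum_k e_{k+1}=1, \qquad wv=\sum_k x_ky_k=\sum_k e_k=1,
\]
so $v$ is a unit with $v^{-1}=w$. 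Next, $ve_k=y_k=e_{k+1}v$, and hence $ve_kv^{-1}=e_{k+1}$. Finally, using that $\omega$ is central,
\[
vuv^{-1}=\sum_k\omega^k e_{k+1}=\omega^{-1}\sum_k\omega^{k+1}e_{k+1}=\omega^{-1}u .
\]

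The only delicate points are bookkeeping. One is the wraparound index $e_n=e_0$, including the reading of the index range in Definition \ref{defn_cyclic equivalence} noted above. The other is the justification that conjugating $u$ conjugates its attached idempotents; this follows immediately from the explicit polynomial formula for $e_k$, so I do not expect a genuine obstacle.
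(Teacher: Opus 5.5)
Your proof is correct and is essentially the paper's argument: (i)$\Rightarrow$(ii) uses the same unit $v=\sum_k y_k$ with inverse $\sum_k x_k$, and (ii)$\Rightarrow$(i) uses the same choice $x_k=e_kv^{-1}$, $y_k=ve_k$. The only real difference is how you get $ve_kv^{-1}=e_{k+1}$: you conjugate the defining formula $e_k=\frac1n\sum_j\omega^{-kj}u^j$, while the paper writes $e_k=p_k(u)$ with the Lagrange polynomials $p_k(t)=\prod_{j\neq k}\frac{t-\omega^j}{\omega^k-\omega^j}$, so your step avoids the hypothesis $\omega^k-\omega^j\in U(R)$. Incidentally, you do not need to assume the link between $e_0$ and $e_1$ separately, since $x_0:=y_{n-1}\cdots y_1$ and $y_0:=x_1\cdots x_{n-1}$ produce it from the other $n-1$ links.
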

\begin{proof}
    (\textbf{$\Rightarrow$}) Suppose $e_0,e_1,\dots,e_{n-1}$ are cyclically equivalent. Then there exist $x_k \in e_kRe_{k+1}$ and $y_k \in e_{k+1}Re_k$ such that
    \[x_ky_k=e_k\;,\;y_kx_k=e_{k+1}.\]
    Let $w:=\sum_{i=0}^{n-1} x_i$ and $v:=\sum_{j=0}^{n-1}y_j$. Note that $x_iy_j=0$ if $i \neq j$ as $x_i \in e_iRe_{i+1}$ and $y_i \in e_{j+1}Re_{j}$ and hence $vw=\sum_{i=0}^{n-1} y_ix_i=\sum_{i=0}^{n-1} e_{i+1}=1$ and $wv=\sum_{i=0}^{n-1} x_iy_i=\sum_{i=0}^{n-1} e_i=1$. Thus, $v$ is invertible and $v^{-1}=w$.
    \[ve_kv^{-1}
    =\left(\sum_{i=0}^{n-1} y_i\right)e_k \left(\sum_{i=0}^{n-1} x_i\right)
    =y_ke_k\left(\sum_{i=0}^{n-1} x_i\right)
    =y_ke_kx_k
    =y_kx_k
    =e_{k+1}.
    \]
    Therefore, 
    $vuv^{-1}=v\big(\sum_{k=0}^{n-1} \omega^{k} e_k\big) v^{-1}
    =\sum_{k=0}^{n-1} \omega^k e_{k+1}
    =\omega^{-1} \big( \sum_{k=0}^{n-1} \omega^{k+1} e_{k+1}\big)
    =\omega^{-1}u$.

    (\textbf{$\Leftarrow$}) Assume that there exists $ v \in U(R)$ such that $vuv^{-1}=\omega^{-1}u$. Let
    \begin{align*}
        p_k(t):=\prod_{j \neq k} \frac{t-\omega^j}{\omega^k-\omega^j}\;.
    \end{align*}
    Note that $p_k(\omega^j)=\delta_{kj}$, where $\delta_{ij}$ is the Kronecker delta.  Since $e_k$s are orthogonal idempotents,   
    \[p_k(u) =\sum_{i=0}^{n-1}p_k(\omega^i)e_i=\sum_{i=0}^{n-1}\delta_{ki} e_i=e_k. \]
    Therefore,
    \begin{align*}
        ve_kv^{-1} &=vp_k(u)v^{-1}\\
        &=p_k(vuv^{-1})\\
        &=p_k(\omega^{-1}u)\\
        &=p_k\left(\sum_{i=0}^{n-1}\omega^{i-1}e_i\right)\\
        &=\sum_{i=0}^{n-1}\delta_{k\;(i-1)}e_i\\
        &=e_{k+1}.
    \end{align*}
    Choose $y_i=ve_k$ and $x_i=e_kv^{-1}$. Then $x_iy_i=e_k$ and $y_ix_i=ve_kv^{-1}=e_{k+1}$.
    
\end{proof}
\begin{remark}
    Even if we assume that $\omega \in U(Z(R))$, it does not give us $\omega^i-\omega^j \in U(R)$. For example, take $R=\ZZ/8\ZZ$, and $\omega=3+8\ZZ$, then $\omega^2=1+8\ZZ$ and hence $\omega$ is a second root of unity. But $\omega-(1+8\ZZ)=2+8\ZZ$ is not invertible in $R$, as $(2+8\ZZ)(4+8\ZZ)=0+8\ZZ$.
\end{remark}
\begin{remark}
    Assuming the ring $R$ has characteristic zero also does not help. For example, in $Z[\omega]$, where $\omega$ is the $n$-th root of unity,  $\omega^i-\omega^j$ is non-zero but not a unit in most of the cases.  Consider $\ZZ[\mathrm{i}]$, where ${\mathrm{i}}^2=-1$, and therefore $\mathrm{i}$ is a $4$-th root of unity. In $\ZZ[\mathrm{i}]$, $(\mathrm{i}-1)$ is not invertible: if $(\mathrm{i}-1)$ is invertible, then there exists $(a+\mathrm{i}b) \in \ZZ[\mathrm{i}]$ such that $(\mathrm{i}-1)(a+\mathrm{i}b)=1$, gives $a=-\frac{1}{2}\notin \ZZ$.
\end{remark}

We note that if $R$ has equivalent orthogonal idempotents that sum up to $1$, the structure of $R$ gets determined.
\begin{proposition} \label{prop: Ring structure with orthogonal idemponet sum 1}
    Let $R$ be a unital ring. Suppose there exist idempotents $e_0,e_1,\dots,e_{n-1} \in R$ such that
    \begin{enumerate}[(i)]
        \item $e_k^2=e_k$ and $e_ie_j=0 \text{ for } i \neq j$.
        \item $\sum_{i=0}^{n-1} e_i=1$.
        \item $e_0,e_1,\dots,e_{n-1}$ are cyclically equivalent.
    \end{enumerate}
    Then $R \cong M_n(S)$, where $S=e_0Re_0$.
\end{proposition}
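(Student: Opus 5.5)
The plan is to build a full set of matrix units $\{E_{ij}\}_{0\le i,j\le n-1}$ in $R$ with $\sum_i E_{ii}=1$ and $E_{00}=e_0$. The classical matrix-unit theorem (cf. \cite[\S 17]{Lam}) then gives $R\cong M_n(S)$ with $S=E_{00}RE_{00}=e_0Re_0$. I will also give the isomorphism explicitly, so the argument stays self-contained.

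\textbf{Step 1: matrix units.} From cyclic equivalence we have $x_i\in e_iRe_{i+1}$ and $y_i\in e_{i+1}Re_i$ with $x_iy_i=e_i$ and $y_ix_i=e_{i+1}$. Only the links $i=0,\dots,n-2$ are needed; the closing link $e_{n-1}\sim e_n=e_0$ is redundant. For $0\le i\le j\le n-1$ set
\[
E_{ij}:=x_ix_{i+1}\cdots x_{j-1}\in e_iRe_j,\qquad E_{ji}:=y_{j-1}\cdots y_{i+1}y_i\in e_jRe_i,
\]
with $E_{ii}=e_i$. Telescoping gives $E_{ij}E_{ji}=e_i$ and $E_{ji}E_{ij}=e_j$. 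Here one uses $x_ky_k=e_k$, together with $e_{k+1}$ being a two-sided identity on $x_{k+1}\cdots$, and so on.

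To get $E_{ij}E_{jk}=E_{ik}$ in general, I would avoid the case analysis by defining $E_{ij}:=E_{i0}E_{0j}$, where $E_{0j}=x_0\cdots x_{j-1}$ and $E_{i0}=y_{i-1}\cdots y_0$. Then
\[
E_{ij}E_{kl}=E_{i0}(E_{0j}E_{k0})E_{0l},
\]
and it suffices to show $E_{0j}E_{k0}=\delta_{jk}e_0$. For $j\ne k$ this holds because $E_{0j}\in e_0Re_j$, $E_{k0}\in e_kRe_0$ and $e_je_k=0$. For $j=k$ it is the telescoping identity above. Finally $E_{ii}=E_{i0}E_{0i}=e_i$, so $\sum_i E_{ii}=1$.

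\textbf{Step 2: the isomorphism.} Define $\Phi:R\to M_n(S)$ by $\Phi(r)_{ij}=E_{0i}\,r\,E_{j0}\in e_0Re_0$. Additivity is clear. Multiplicativity follows by inserting $1=\sum_k E_{kk}=\sum_k E_{k0}E_{0k}$:
\[
\Phi(rs)_{ij}=\sum_k E_{0i}rE_{k0}E_{0k}sE_{j0}=\sum_k \Phi(r)_{ik}\Phi(s)_{kj}.
\]
We also have $\Phi(1)_{ij}=E_{0i}E_{j0}=\delta_{ij}e_0$, which is the identity of $M_n(S)$. The inverse is $\Psi\big((s_{ij})\big)=\sum_{i,j}E_{i0}s_{ij}E_{0j}$. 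Both compositions are the identity:
\[
\Psi\Phi(r)=\sum_{i,j}E_{ii}\,r\,E_{jj}=r,
\]
and $\Phi\Psi=\mathrm{id}$ follows from $E_{0i}E_{k0}=\delta_{ik}e_0$ and $s_{ij}\in e_0Re_0$.

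The only delicate point is Step 1: keeping the telescoping products consistent so that all of the relations $E_{ij}E_{kl}=\delta_{jk}E_{il}$ hold. Routing every unit through index $0$ as above is how I plan to handle this. Note that the definition in the paper only asks for links with $1\le i\le n-1$. If link $0$ is not provided, I would instead route through $e_1$, or use the cyclic link from $e_{n-1}$ to $e_n=e_0$ to reach $e_0$. Either way the chain $e_0\sim e_{n-1}\sim\cdots\sim e_1$ connects all the idempotents, which is all that is needed.
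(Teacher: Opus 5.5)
Your proof is correct and is essentially the paper's argument. Your $E_{i0}=y_{i-1}\cdots y_0$ and $E_{0j}=x_0\cdots x_{j-1}$ are exactly the paper's $X_i$ and $Y_j$, your identity $E_{0j}E_{k0}=\delta_{jk}e_0$ is its relation $Y_jX_k=\delta_{jk}e_0$, and your $\Psi$ is its map $\phi$. Beyond that, you actually verify multiplicativity and unitality through the explicit inverse $\Phi$, which the paper simply asserts, and you rightly note that the paper's definition of cyclic equivalence literally supplies links only for $1\le i\le n-1$ while its proof uses $x_0,y_0$ — harmless, as you say, since one can route through $e_{n-1}\sim e_0$ instead.
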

\begin{proof}
    For $i=0,1,\dots,n-1$, let $x_i\in e_iRe_{i+1}, y_i \in e_{i+1}Re_i$ be such that $x_iy_i=e_i$ and $y_ix_i=e_{i+1}$.\\
For $i \geq 1$, let
    \begin{align*}
        X_i:=y_{i-1}y_{i-2}\dots y_0 \in e_iRe_0\;\text{\;and\;}\;Y_i:=x_0x_1 \dots x_{i-1} \in e_0Re_i\;.
    \end{align*}
    and $X_0=Y_0=e_0$.
    \begin{equation*}\label{product_XiYi}
      \begin{split}
        X_iY_i&=(y_{i-1}y_{i-2} \dots y_0)(x_0x_1 \dots x_{i-1})\\
        &=y_{i-1}y_{i-2} \dots y_{1}(y_0x_0)x_1 \dots x_{i-1}\\
        &=y_{i-1}y_{i-2} \dots y_{1}e_1x_1 \dots x_{i-1}\\
        &=y_{i-1}y_{i-2} \dots (y_1x_1) \dots x_{i-1}~~(\text{since } e_1x_1=x_1 \text{ as } x_1  \in e_1Re_2)\\
        &\;\;\;\;\;\vdots\\
        &=y_{i-1}x_{i-1}\\
        &=e_i.
    \end{split}
      \end{equation*}
    By similar computation,
\begin{equation}\label{product of X_i and Y_j}
    Y_jX_k=\begin{cases}
        e_0 &\text{if\;}j=k\\
        0 & \text{if\;}j \neq k
    \end{cases}\;.
\end{equation}
 Note that for any $r \in R$,  
    \begin{equation}\label{YirXj is in S}
        Y_irX_j \in (e_0Re_i)R(e_jRe_0) \subseteq e_0Re_0=S.
    \end{equation}
    Define
    \begin{align*}
        \phi: & M_n(S) \to R\\
        & (s_{ij}) \to \sum_{i,j}X_is_{ij}Y_j\;.
    \end{align*}
    This $\phi$ is a ring homomorphism.
    
    \textbf{Injectivity of $\phi$:} Let $(s_{ij}) \in M_n(S)$ such that $\phi\Big((s_{ij})\Big)=0$, that is $\sum_{i,j} X_i s_{ij} Y_j=0$.  This implies $\sum_{i,j} Y_kX_i s_{ij} Y_jX_l=0$. Hence, by \eqref{product of X_i and Y_j}, $(Y_iX_i) s_{ij} (Y_jX_j)=e_0 s_{ij} e_0=0$, for $0 \leq i,j \leq n-1$.  Hence, $s_{ij}=0$.
    
    \textbf{Surjectivity of $\phi$:} Let $r \in R$, then $r=\sum_{i,j} e_ire_j=\sum_{i,j} X_iY_irX_jY_j$. Choose $s_{ij}=Y_irX_j \in e_0Re_0$ (cf. \eqref{YirXj is in S}), then $r=\sum_{i,j} X_i s_{ij} Y_j=\phi((s_{ij}))$.
\end{proof}

Before proving the final result, we note the following proposition to emphasize the fact that the assumption $\omega^i-\omega^j \in U(R)$ is quite natural, wherever it appears.
\begin{proposition}
    Let $R$ be a unital ring with $2 \in U(R)$. Suppose there exist $a,b \in R$ with $u:=[a,b]$ and $u^2=1$.

    Let $e_0=\frac{1+u}{2}$ and $e_1=1-e_0=\frac{1-u}{2}$. Then the following are equivalent:
\begin{enumerate}[(i)]
        \item There exist $x, y \in R$ such that
        $xy=e_0$ and $yx=e_1$.
        \item There exist $x,y \in R$ such that $[x,y]=u$ and $xy+yx=1$.
    \end{enumerate}
\end{proposition}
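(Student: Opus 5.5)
Both implications follow directly from the definitions $e_0=\frac{1+u}{2}$ and $e_1=\frac{1-u}{2}$, so the plan is to translate between the pair of equations $\{xy=e_0,\ yx=e_1\}$ and the pair $\{xy-yx=u,\ xy+yx=1\}$ by an invertible linear change of variables. The only fact used is $2\in U(R)$, which makes the change invertible. The hypotheses that $u$ is a commutator and that $u^2=1$ are needed only to guarantee that $e_0,e_1$ are orthogonal idempotents summing to $1$, as in Lemma \ref{lem: Creating_idempotents} with $n=2$ and $\omega=-1$. The equivalence itself does not depend on them.

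For (i) $\Rightarrow$ (ii), I take the same $x,y$. Then
\[
[x,y]=xy-yx=e_0-e_1=\tfrac{1+u}{2}-\tfrac{1-u}{2}=u,
\qquad
xy+yx=e_0+e_1=1 .
\]
No structure of $x,y$ is required; in particular there is no need for $x\in e_0Re_1$.

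For (ii) $\Rightarrow$ (i), I again take the same $x,y$. Since $2$ is invertible, I recover the individual products as half-sums and half-differences:
\[
xy=\tfrac12\big((xy+yx)+(xy-yx)\big)=\tfrac{1+u}{2}=e_0,
\qquad
yx=\tfrac12\big((xy+yx)-(xy-yx)\big)=\tfrac{1-u}{2}=e_1 .
\]

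I expect no real obstacle; the only point to watch is that division by $2$ is legitimate, which is exactly the hypothesis $2\in U(R)$. I would end with a remark connecting this to Proposition \ref{prop:cyclically equivaence}. One can replace $x,y$ by $e_0xe_1$ and $e_1ye_0$, using $e_0x=xyx=xe_1$ and similarly for $y$. This shows that condition (i) is the cyclic equivalence of $e_0,e_1$ in the sense of Definition \ref{defn_cyclic equivalence} for $n=2$. It also illustrates why the hypothesis $\omega^i-\omega^j\in U(R)$, which here reads $2\in U(R)$, is natural.
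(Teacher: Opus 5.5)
Your proposal is correct and follows the paper's proof: take the same $x,y$ in both directions, use $e_0\pm e_1$ for (i)$\Rightarrow$(ii), and divide $(xy+yx)\pm(xy-yx)$ by $2$ for (ii)$\Rightarrow$(i). Your closing remark about replacing $x,y$ by $e_0xe_1$ and $e_1ye_0$ to obtain cyclic equivalence is also correct, though the paper does not include it.
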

\begin{proof}
    \big(\textbf{$(i)\Rightarrow (ii)$}\big) Let $xy=e_0$ and $yx=e_1$. Then $xy+yx=e_0+e_1=1$ and $xy-yx=e_0-e_1=\frac{1+u}{2}-\frac{1-u}{2}=u$.

    \big(\textbf{$(ii)\Rightarrow (i)$}\big) Suppose there exist $x,y$ such that $[x,y]=u$ and $xy+yx=1$. Then $2xy=1+u$, gives $xy=\frac{1+u}{2}=e_0$. Similarly $2yx=1-u$, gives $yx=\frac{1-u}{2}=e_1$.
\end{proof}
\begin{remark}
    Note that in this case, i.e., when $n=2$ and $w=-1$ is a $2$-nd root of unity, $w^0-w^1=1-w=2\in U(R)$.  That is, the condition $2\in U(R)$ automatically forces $\omega^i-\omega^j \in U(R)$.
\end{remark}
\begin{theorem}\label{thm: Structure theorem due to commutator}
    Let $R$ be a unital ring with $n \in U(R)$, $\omega \in U(Z(R))$ and $\omega^i-\omega^j \in U(R)$  for all $i \neq j,~ 0 \leq i,j \leq n-1$. Suppose there exist $a,b \in R$ with $u:=[a,b]$ and $u^n=1$.

    If there exists $v \in U(R)$ such that $vuv^{-1}=\omega^{-1}u$, then 
    $R \cong M_n(S)$, where $S=e_0Re_0$, with $e_0$ an idempotent in $R$.
\end{theorem}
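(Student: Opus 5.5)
The theorem follows by chaining three earlier results, so the plan is essentially an assembly argument.

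\textbf{Step 1: build the idempotents.} Apply Lemma \ref{lem: Creating_idempotents} to $u=[a,b]$. Since $n\in U(R)$, $\omega\in U(Z(R))$ and $u^n=1$, this produces $e_0,\dots,e_{n-1}$ with
\begin{equation*}
e_k^2=e_k, \qquad e_ke_\ell=0 \ (k\neq\ell), \qquad \sum_k e_k=1, \qquad u=\sum_k\omega^k e_k.
\end{equation*}
These come from the explicit formula $e_k=\frac1n\sum_j\omega^{-kj}u^j$. This gives hypotheses (i) and (ii) of Proposition \ref{prop: Ring structure with orthogonal idemponet sum 1}.

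\textbf{Step 2: obtain cyclic equivalence.} Invoke the direction (ii)$\Rightarrow$(i) of Proposition \ref{prop:cyclically equivaence}, using the given $v\in U(R)$ with $vuv^{-1}=\omega^{-1}u$.

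\begin{itemize}
\item The hypothesis $\omega^i-\omega^j\in U(R)$ lets us form the Lagrange interpolation polynomials $p_k$. They satisfy $p_k(u)=e_k$.
\item Conjugating by $v$ then gives $ve_kv^{-1}=p_k(\omega^{-1}u)=e_{k+1}$, with indices mod $n$.
\item Set $y_k:=ve_k=e_{k+1}v\in e_{k+1}Re_k$ and $x_k:=e_kv^{-1}=v^{-1}e_{k+1}\in e_kRe_{k+1}$.
\item Then $x_ky_k=e_kv^{-1}ve_k=e_k$ and $y_kx_k=ve_kv^{-1}=e_{k+1}$. This is exactly cyclic equivalence.
\end{itemize}

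I will check that $x_k$ and $y_k$ genuinely lie in the correct Peirce corners. This follows from $ve_k=e_{k+1}v$, which is just the conjugation identity rewritten. The check matters because the proof of Proposition \ref{prop:cyclically equivaence} is slightly loose about indices.

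\textbf{Step 3: conclude.} With (i)–(iii) verified, Proposition \ref{prop: Ring structure with orthogonal idemponet sum 1} yields $R\cong M_n(S)$ with $S=e_0Re_0$. The isomorphism is $\phi\colon(s_{ij})\mapsto\sum X_is_{ij}Y_j$, where $X_i=y_{i-1}\cdots y_0$ and $Y_i=x_0\cdots x_{i-1}$.

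\textbf{Main obstacle.} The delicate point is Step 2: justifying $vp_k(u)v^{-1}=p_k(vuv^{-1})$ and evaluating $p_k(\omega^{-1}u)$. Both steps need the coefficients of $p_k$ to commute with $v$ and with the $e_i$.
\begin{itemize}
\item Powers of $\omega$ are central by hypothesis.
\item The inverses $(\omega^k-\omega^j)^{-1}$ must also be central. This holds because the inverse of a central unit is central.
\end{itemize}
With centrality in hand, $p_k(\sum_i\lambda_ie_i)=\sum_ip_k(\lambda_i)e_i$ holds for central scalars $\lambda_i$, by orthogonality of the $e_i$. The evaluation $p_k(\omega^{i-1})=\delta_{k,i-1}$ then gives $e_{k+1}$.
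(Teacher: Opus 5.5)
Your proposal is correct and is the paper's own proof: Lemma~\ref{lem: Creating_idempotents}, then the direction (ii)$\Rightarrow$(i) of Proposition~\ref{prop:cyclically equivaence}, then Proposition~\ref{prop: Ring structure with orthogonal idemponet sum 1}. Your added checks tidy up points the paper leaves loose: that $(\omega^k-\omega^j)^{-1}$ is central, and that $x_k=e_kv^{-1}\in e_kRe_{k+1}$ and $y_k=ve_k\in e_{k+1}Re_k$ via $ve_k=e_{k+1}v$. One small correction to Step~1: the orthogonality computation also needs $1-\omega^m\in U(R)$ for $1\le m\le n-1$, so that $\sum_{r}\omega^{mr}=0$, and this comes from the hypothesis $\omega^i-\omega^j\in U(R)$, not from the three facts you list.
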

\begin{proof}
    By Lemma \ref{lem: Creating_idempotents}, there exist orthogonal idempotents $e_0,e_1,\dots e_{n-1}$ with $\sum_{i=0}^{n-1} e_i=1$. By Proposition \ref{prop:cyclically equivaence}, all $e_i$ are cyclically equivalent. By Proposition \ref{prop: Ring structure with orthogonal idemponet sum 1}, $R \cong M_n(S)$, where $S=e_0Re_0$.
\end{proof}

Let $R$ be a unital ring.  By $\xi(R)$, we denote the smallest positive integer $N$ such that every element of $R$ can be written as a sum of $N$ many terms, where each term is a product of two commutators (cf. \cite[Definition 5.4, p.~230]{GaTh}).  This is an important invariant and has been studied by several researchers (cf. \cite{GaTh}, \cite{Me} and \cite{Ro}).  As an immediate consequence of Theorem \ref{thm: Structure theorem due to commutator}, we obtain an upper bound of the invariant $\xi(R)$ for some unital ring $R$. 


\begin{corollary}
Let $R$ be a unital ring with $n \in U(R)$, $\omega \in Z(U(R))$ a primitive $n$-th
root of unity, and $\omega^{i} - \omega^{j} \in U(R)$ for all $i \neq j,~ 0 \leq i,j \leq n-1$.
If there exists $v \in U(R)$ such that $vuv^{-1}=\omega^{-1}u$,
\[
    \xi(R) \leq 2. 
\]
\end{corollary}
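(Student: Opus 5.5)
Under these hypotheses Theorem \ref{thm: Structure theorem due to commutator} gives $R \cong M_n(S)$ with $S=e_0Re_0$ and $n\geq 2$. So the plan is to reduce the corollary to a known fact about full matrix rings: over an arbitrary unital ring $S$, every matrix in $M_n(S)$ with $n\geq 2$ is a sum of two products of two commutators. This is the content of the cited results on the invariant $\xi$ (cf. \cite[Theorem 5.5]{GaTh}, and \cite{Me} for the general statement $\xi(M_n(S))\leq 2$). Since $\xi$ is clearly invariant under ring isomorphism (commutators and products are preserved), the corollary follows once this is in hand.

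To make the argument self-contained, I would prove $\xi(M_n(S))\leq 2$ directly using matrix units $E_{ij}$. First, a matrix supported off the diagonal is a product of two commutators. For instance $E_{ij}=[E_{ij},E_{jj}]$ for $i\neq j$, and $E_{ii}=[E_{ij},E_{ji}]+E_{jj}$ in a suitable combination. More usefully, a strictly upper (or lower) triangular matrix is itself a commutator with a diagonal or nilpotent matrix; a standard choice is $[N,X]$ with $N=\sum_i E_{i,i+1}$.

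Next I would split an arbitrary $r\in M_n(S)$ as $r=r_1+r_2$. Here $r_1$ is chosen so that it can be written as $c\,c'$ with $c=[E_{12}+E_{21}\text{-type},\cdot]$ invertible-like. The cleanest trick uses the commutator $u=[a,b]$ itself, which is a unit: $u$ is a commutator and $u^{-1}=u^{n-1}$. Since $r=u\cdot(u^{-1}r)$, it then suffices to write $u^{-1}r$ as a sum of two commutators. Every $r\in R$ is a sum of two commutators in $M_n(S)$, by a result of Mesyan (each matrix is a sum of two commutators when $n\geq 2$; cf. \cite{Me}). So $r=u[c_1,d_1]+u[c_2,d_2]$, and each term is a product of two commutators, giving $\xi(R)\leq 2$.

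The main obstacle is justifying that every element of $M_n(S)$ is a sum of two commutators for arbitrary unital $S$. I would invoke Mesyan's theorem rather than reprove it. If needed, a reproof would go as follows: write a matrix as (trace-zero-like part in the first row/column) plus a diagonal part, and realize each as $[N,X]$ and $[D,Y]$ for the shift $N$ and a diagonal $D$, solving for $X,Y$ entrywise. Note that the hypothesis on $v$ enters only through Theorem \ref{thm: Structure theorem due to commutator}, while the unit commutator $u$ is given directly by assumption.
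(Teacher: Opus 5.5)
Your first paragraph is exactly the paper's proof: apply Theorem~\ref{thm: Structure theorem due to commutator} to get $R\cong M_n(S)$, then quote the Gardella--Thiel bound $\xi(M_n(S))\leq 2$ (the paper cites Theorem 5.4 there). That alone suffices.

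The self-contained argument you add, however, rests on a false claim. It is \emph{not} true that every element of $M_n(S)$, $n\geq 2$, is a sum of two commutators. Mesyan proves only that every \emph{trace-zero} matrix over an arbitrary ring is a sum of two commutators. In general any sum of commutators in $M_n(S)$ has trace in $[S,S]$.

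Concretely, $R=M_n(\CC)$ satisfies every hypothesis of the corollary. Here $u=\diag(1,\omega,\dots,\omega^{n-1})$ has trace zero, hence is a commutator, and the cyclic permutation matrix $v=P$ gives $vuv^{-1}=\omega^{-1}u$. Yet for $r=u$ your recipe needs $u^{-1}r=\Id_n$ to be a sum of commutators, which is impossible since $\Tr(\Id_n)=n\neq 0$. So the factorization $r=u\cdot(u^{-1}r)$ cannot be completed. The other sketches (the $E_{ij}$ identities, ``$r_1=cc'$ with $c$ invertible-like'') do not add up to a proof. Also, the general bound $\xi(M_n(S))\leq 2$ is Gardella--Thiel's, not Mesyan's.

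Your idea of using the unit commutator $u$ as one factor does work if you apply it only to the part of $r$ that really is a commutator.
\begin{enumerate}
\item \textbf{Off-diagonal elements are commutators with $u$.} Take $e_0,\dots,e_{n-1}$ from Lemma~\ref{lem: Creating_idempotents}. For $x\in e_iRe_j$ one has $[u,x]=(\omega^i-\omega^j)x$, and $\omega^i-\omega^j$ is a central unit. Hence every Peirce off-diagonal element $y\in\bigoplus_{i\neq j}e_iRe_j$ equals $[u,z]$ with $z=\sum_{i\neq j}(\omega^i-\omega^j)^{-1}e_iye_j$.
\item \textbf{Off-diagonal part of $r$.} Since $u^{-1}=\sum_k\omega^{-k}e_k$ preserves off-diagonality, the off-diagonal part of $r$ is $r_{\mathrm{off}}=u\cdot(u^{-1}r_{\mathrm{off}})$, a product of two commutators.
\item \textbf{Diagonal part of $r$.} Let $r_{\mathrm{d}}=\sum_i e_ire_i$. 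The relation $ve_kv^{-1}=e_{k+1}$ from Proposition~\ref{prop:cyclically equivaence} gives $e_ive_j=0$ unless $i\equiv j+1 \pmod n$, and $e_iv^{-1}e_j=0$ unless $i\equiv j-1 \pmod n$. So, as $n\geq 2$, both $v$ and $r_{\mathrm{d}}v^{-1}$ are off-diagonal, and $r_{\mathrm{d}}=(r_{\mathrm{d}}v^{-1})\,v$ is a product of two commutators.
\end{enumerate}
Thus $r=u[u,z_1]+[u,z_2][u,z_3]$ with $u=[a,b]$. This proves $\xi(R)\leq 2$ directly, without passing through $R\cong M_n(S)$ or any external citation.
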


\begin{proof}
Under the given hypotheses, Theorem \ref{thm: Structure theorem due to commutator} gives $R \cong M_n(S)$ for some unital ring $S$.
The bound $\xi(M_n(S)) \leq 2$ then follows directly from \cite[Theorem~5.4, p.~231]{GaTh}.
\end{proof}

As another application of Theorem \ref{thm: Structure theorem due to commutator}, we provide an example in which a subring of a polynomial ring turns out to be a matrix ring.
\begin{example}
    Let $K=\mathbb{Q}(\omega)$  and \[R=\{K[x,y] \ | \ x^n=a,y^n=b,yx=\omega xy\},\] where $a,b \in \QQ(\omega)$.
    Note that, as $yx=\omega xy$ holds in $R$, the following equality holds as well in $R$ : 
\begin{equation}\label{interchange}
  y^nx=\omega^n xy^n.  
\end{equation}
We claim that $(xy)^n=w^{^n C_2}x^ny^n$. We prove it by induction.  For $n=2$, we have 
\begin{equation*}(xy)^2=xyxy=x\omega xyy=\omega x^2y^2=\omega^{^2 C_2}x^2y^2.
\end{equation*}
Assume that our claim is true for $n-1$. Then, 
\begin{align*}
(xy)^n=&(xy)^{n-1}xy\\
=&w^{^{n-1} C_2}x^{n-1}y^{n-1}xy\\
=&w^{^{n-1} C_2}x^{n-1}\omega^{n-1}xy^{n-1}y\quad (\text{by}\quad \eqref{interchange})\\
=&w^{\left({^{n-1} C_2}+(n-1)\right)}x^ny^n\\
=&w^{^nC_ 2}x^ny^n
\end{align*}
Hence, the claim follows.\\
Now, choose  $a,b \in \QQ(\omega)$ such that
    \[(1-\omega)^n \omega^{^nC_2}ab=1.\]
    Note that $x$ is invertible in $R$. Take $v=x$ and $u=[x,y]$. Then $u^n=(xy-yx)^n=(1-\omega)^n(xy)^n=(1-\omega)^n \omega^{^nC_2}x^ny^n=(1-\omega)^n \omega^{^nC_2}ab=1$. It is easy to note that $vuv^{-1}=\omega^{-1}u$. Therefore, by Theorem \ref{thm: Structure theorem due to commutator}, $R \cong M_n(S)$, where $S$ is a unital ring, in particular $S=e_0Re_0$ with $e_0=\frac{1}{n}(1+u+u^2+\dots+u^{n-1})=\frac{1}{n}(1+(1-\omega) xy+(1-\omega)^2(xy)^2+\dots+(1-\omega)^{n-1}(xy)^{n-1})$.
\end{example}
\subsection*{Alternative proof of Theorem \ref{thm: Structure theorem due to commutator}}
In the paper \cite{GoKh}, Goyal and Khurana have established an equivalent characterization for a ring to be a matrix ring in terms of a single element. We use the following result from \cite{GoKh} to provide an alternative proof of Theorem \ref{thm: Structure theorem due to commutator}. 

Recall that an element $x \in R$ is said to be \textit{regular} if there exists $y \in R$ such that $xyx=x$. Note that any invertible element $x$ is regular with $y=x^{-1}$. By  $l_R(x)$, we denote the set of \textit{left annihilators of} $x$, that is 
\[l_R(x):=\{a \in R\;\mid\; ax=0\}.\]

\begin{proposition}[Theorem 2.4, p.~97, \cite{GoKh}] \label{prop: Goyal Khurana characterization of matrix rings}
A ring $R$ is an $n \times n$ matrix ring if and only if there exists a regular element $x$ in $R$ such that $l_R(x) = Rx^{n-1}$. Moreover, if a regular element $x$ with $l_R(x) = Rx^{n-1}$ exists, then $R \cong M_n(\End_R(Rx^{n-1}))$.
\end{proposition}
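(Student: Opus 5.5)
The plan is to prove both directions directly, working with left $R$-modules and the standard fact that ${}_RR\cong M^{n}$ implies $R\cong M_n(\End_R(M))$, with endomorphisms written on the right.

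\textbf{($\Rightarrow$).} For $R=M_n(S)$, take the nilpotent shift $x=\sum_{i=1}^{n-1}E_{i+1,i}$ and its transpose $y=\sum_{i=1}^{n-1}E_{i,i+1}$.
\begin{enumerate}[(a)]
\item A direct check gives $xyx=x$, so $x$ is regular.
\item Column $i$ of $ax$ is column $i+1$ of $a$ for $i\le n-1$, and the last column of $ax$ is zero. Hence $ax=0$ if and only if $a$ is supported in the first column.
\item Since $x^{n-1}=E_{n,1}$, the left ideal $Rx^{n-1}$ is exactly the set of matrices supported in the first column.
\end{enumerate}
Therefore $l_R(x)=Rx^{n-1}$.

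\textbf{($\Leftarrow$), preliminary facts.} Fix $y$ with $xyx=x$.
\begin{enumerate}[(a)]
\item $f:=1-xy$ is idempotent and $l_R(x)=Rf$. Indeed $fx=0$, and conversely $ax=0$ gives $a=af$. So $Rx^{n-1}=Rf$ is a direct summand of ${}_RR$, hence projective.
\item $x^n=0$, because $x^{n-1}\in l_R(x)$.
\item $l_R(x^k)=Rx^{n-k}$ for $1\le k\le n$, by induction on $k$. The inclusion $\supseteq$ follows from $x^n=0$. Conversely, suppose $ax^{k+1}=0$. Then $ax^k\in l_R(x)=Rx^{n-1}$, so $ax^k=rx^{n-1}=(rx^{n-1-k})x^k$ for some $r\in R$. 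Thus $a-rx^{n-1-k}\in l_R(x^k)=Rx^{n-k}$, and so $a\in Rx^{n-k-1}$.
\end{enumerate}

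\textbf{($\Leftarrow$), main step.} For $0\le k\le n-2$, right multiplication by $x$ gives a surjection $Rx^k\to Rx^{k+1}$. Its kernel is $Rx^k\cap l_R(x)=Rx^k\cap Rx^{n-1}=Rx^{n-1}$, so we get a short exact sequence
\[0\to Rx^{n-1}\to Rx^k\to Rx^{k+1}\to 0.\]
I would run a downward induction on $k$, starting from $k=n-2$. At each stage $Rx^{k+1}$ is projective: it is $Rx^{n-1}$ when $k=n-2$, and by the inductive step it is a finite direct sum of copies of $Rx^{n-1}$ otherwise. So the sequence splits, and $Rx^k\cong Rx^{k+1}\oplus Rx^{n-1}\cong (Rx^{n-1})^{\,n-k}$. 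At $k=0$ this gives ${}_RR\cong (Rx^{n-1})^n$. Then
\[R\cong \End_R({}_RR)^{\mathrm{op}}\cong M_n\big(\End_R(Rx^{n-1})\big),\]
with endomorphisms acting on the right, which is also the ``moreover'' clause.

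\textbf{Main obstacle.} The delicate step is getting projectivity of every $Rx^{k}$ without knowing that the powers $x^k$ are themselves regular. The downward induction handles this: it only ever needs regularity of $x$, through fact (a). The identification $l_R(x^k)=Rx^{n-k}$ in fact (c) is what makes each kernel equal to the single module $Rx^{n-1}$ at every stage.
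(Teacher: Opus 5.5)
The paper gives no proof of this statement. It quotes the result from Goyal--Khurana and uses it only as a black box in the alternative proof of Theorem~\ref{thm: Structure theorem due to commutator}. So there is no internal argument to compare with. Judged on its own, your proof is correct and self-contained.

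\textbf{Forward direction.} In $M_n(S)$ your shift satisfies $xy=\sum_{i\ge 2}E_{ii}$, $xyx=x$ and $x^{n-1}=E_{n1}$. Both $l_R(x)$ and $Rx^{n-1}$ are exactly the matrices supported in the first column.

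\textbf{Reverse direction.} The identity $l_R(x)=R(1-xy)$ makes $Rx^{n-1}$ a direct summand of ${}_RR$. The kernel of right multiplication by $x$ from $Rx^k$ to $Rx^{k+1}$ is $Rx^k\cap Rx^{n-1}=Rx^{n-1}$. The downward splitting induction then gives ${}_RR\cong (Rx^{n-1})^n$. You correctly identify projectivity, rather than regularity of the powers $x^k$, as what makes the induction run.

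\textbf{What this buys in the paper.} Specialize to $x=v(1-e_{n-1})$ and $y=(1-e_{n-1})v^{-1}$ from Proposition~\ref{prop: exitence of regular element}. Then $xy=v(1-e_{n-1})v^{-1}=1-e_0$, so your idempotent is $f=e_0$ and you obtain ${}_RR\cong (Re_0)^n$. This is exactly the decomposition that Proposition~\ref{prop: Ring structure with orthogonal idemponet sum 1} builds from explicit matrix units. The ``alternative proof'' is therefore the same decomposition reached by splitting exact sequences instead of writing down $X_i,Y_i$. With your argument it no longer depends on an external citation.

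Two small corrections.

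First, fact (c) is never used. The kernel computation needs only $l_R(x)=Rx^{n-1}$ together with the inclusion $Rx^{n-1}\subseteq Rx^k$ for $k\le n-1$. Your closing remark therefore gives (c) a role it does not play, and (c) can be deleted.

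Second, your final display mixes conventions. The isomorphism $R\cong\End_R({}_RR)^{\mathrm{op}}$ belongs to the left-written convention. In that convention ${}_RR\cong M^n$ gives $R\cong M_n(\End_R(M))^{\mathrm{op}}\cong M_n(\End_R(M)^{\mathrm{op}})$, via transpose. This equals $M_n(\End_R(M))$ only after passing to right-written endomorphisms. The conclusion you want is right, but the chain should be written in a single convention.
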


We use the standing hypothesis on $R$ in Theorem \ref{thm: Structure theorem due to commutator} to show the existence of a regular element $x$ satisfying the left annihilator condition $l_R(x)=Rx^{n-1}$.

\begin{proposition}\label{prop: exitence of regular element}
    Let $R$ be a unital ring with $n \in U(R)$, $\omega \in U(Z(R))$ and $\omega^i-\omega^j \in U(R)$  for all $i \neq j,~ 0 \leq i,j \leq n-1$. Suppose there exist $a,b \in R$ with $u:=[a,b]$ and $u^n=1$.

    There exists $v \in U(R)$ such that $vuv^{-1}=\omega^{-1}u$.
    Then there exists a regular element $x$ with $l_R(x)=Rx^{n-1}$.
\end{proposition}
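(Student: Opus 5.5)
The plan is to build, from the cyclically equivalent idempotents, an element that acts like the nilpotent shift matrix in $M_n(S)$, and then check the left annihilator condition of Proposition \ref{prop: Goyal Khurana characterization of matrix rings} directly.

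First, by Lemma \ref{lem: Creating_idempotents} we obtain orthogonal idempotents $e_0,\dots,e_{n-1}$ summing to $1$ with $u=\sum_k\omega^ke_k$. By Proposition \ref{prop:cyclically equivaence}, the hypothesis $vuv^{-1}=\omega^{-1}u$ makes them cyclically equivalent. Concretely, $ve_kv^{-1}=e_{k+1}$, so we may take $y_k=ve_k\in e_{k+1}Re_k$ and $x_k=e_kv^{-1}\in e_kRe_{k+1}$.

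Next, define the truncated shift
\[
x:=\sum_{k=0}^{n-2} y_k = v(1-e_{n-1}) = (1-e_0)v .
\]
In matrix terms this is the nilpotent Jordan block $E_{21}+E_{32}+\dots+E_{n,n-1}$. Using $e_{k+1}v=ve_k$, we compute
\[
x^j=v^j(1-e_{n-1}-e_{n-2}-\dots-e_{n-j})=(1-e_0-\dots-e_{j-1})v^j .
\]
In particular $x^{n-1}=e_{n-1}v^{n-1}$ and $x^n=0$.

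For regularity, take $z:=v^{-1}(1-e_0)=\sum_{k=0}^{n-2}x_k$. Then $xz=(1-e_0)vv^{-1}(1-e_0)=1-e_0$, so $xzx=(1-e_0)x=x$ and $x$ is regular.

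The main step is the annihilator identity $l_R(x)=Rx^{n-1}$. We argue both inclusions.
\begin{itemize}
\item[($\supseteq$)] We have $x^{n-1}x=x^n=e_{n-1}v^{n-1}(1-e_0)v$. Since $v^{n-1}e_0=e_{n-1}v^{n-1}$ and $e_{n-1}(1-e_{n-1})=0$, this product vanishes, so $Rx^{n-1}\subseteq l_R(x)$.
\item[($\subseteq$)] Suppose $ax=0$. Then $0=axz=a(1-e_0)$, so $a=ae_0$. Hence
\[
a=ae_0=a\,e_0v^{n-1}v^{-(n-1)}=a\,v^{n-1}e_{n-1}\,v^{-(n-1)} .
\]
Since $e_{n-1}v^{n-1}=x^{n-1}$, we would like to rewrite $e_{n-1}v^{-(n-1)}$ in the form $v^{-(n-1)}(\cdots)$. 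Conjugation gives $v^{-(n-1)}e_{n-1}v^{n-1}=e_0$. Therefore
\[
a=ae_0=a\,v^{-(n-1)}e_{n-1}v^{n-1}=\bigl(av^{-(n-1)}\bigr)x^{n-1}\in Rx^{n-1}.
\]
\end{itemize}

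The main obstacle is the index bookkeeping in the relations $ve_kv^{-1}=e_{k+1}$ with indices mod $n$ (so $v^{n-1}e_0v^{-(n-1)}=e_{n-1}$). Once these are settled, both inclusions are one-line computations. The conclusion then also follows from Proposition \ref{prop: Goyal Khurana characterization of matrix rings}, giving the alternative proof of Theorem \ref{thm: Structure theorem due to commutator}.
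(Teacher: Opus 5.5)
Your proof is correct and is essentially the paper's: you use the same element $x=v(1-e_{n-1})$ and the same power formula giving $x^{n-1}=v^{n-1}e_0=e_{n-1}v^{n-1}$. The only cosmetic difference is that you obtain $l_R(x)\subseteq Re_0$ from $xz=1-e_0$, whereas the paper uses the chain $Re_{n-1}v^{-1}=Rv^{-1}e_0=Re_0$.

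One small slip: your intermediate equality $a\,e_0v^{n-1}v^{-(n-1)}=a\,v^{n-1}e_{n-1}v^{-(n-1)}$ is false for $n\geq 3$, since in fact $e_0v^{n-1}=v^{n-1}e_1$. It is harmless because you never use it; your next display, via $v^{-(n-1)}e_{n-1}v^{n-1}=e_0$, is correct.
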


\begin{proof}
    By Lemma \ref{lem: Creating_idempotents}, there exist orthogonal idempotents $e_0,e_1,\dots, e_{n-1}$ with $\sum_{i=0}^{n-1} e_i=1$. By Proposition \ref{prop:cyclically equivaence}, all $e_i$ are cyclically equivalent, in particular
    \begin{equation}\label{cy eqv consequence}
     ve_kv^{-1}=e_{k+1}\;,   
    \end{equation}
    where the indices are considered $\mod\;n$.

    Set $x=v(1-e_{n-1})$. We claim that $x$ is regular and satisfy the left annihilator condition $l_R(x)=Rx^{n-1}$.

    \textbf{Regularity:} Let $y:=(1-e_{n-1})v^{-1}$.
    \begin{align*}
        xyx&=v(1-e_{n-1})(1-e_{n-1})v^{-1}v(1-e_{n-1})\\
        &=v(1-e_{n-1})\\
        &=x\;.
    \end{align*}

    \textbf{Annihilator condition:} We claim that for any $m, 1 \leq m \leq n-1$,
    \[x^m=v^m(1-e_{n-m}-\dots-e_{n-1})\;.\]
    We prove it by induction. The base case $m=1$ follows from the definition of $x$. We assume that it is true for any $1 \leq m <n-1$. We prove it for $m+1$. Since for any $m, 1 \leq m \leq n-1$, we have $e_jv=ve_{j-1}$ by \eqref{cy eqv consequence}, the following equality follows :
\begin{align}\label{eq:intertwining_v}
    (1-e_{n-m}-\dots-e_{n-1})v=v(1-e_{n-m-1}-\dots-e_{n-2}).
    \end{align}
    We prove the induction step.
    \begin{align*}
        x^{m+1}&=x^mx\\
        &=v^m(1-e_{n-m}-\dots-e_{n-1})x\;\;\;\;\;\;\;\;\;\;\;\;\;\;\;\;\;\;\;\;\;\big(\text{by induction hypothesis}\big)\\
        &=v^m(1-e_{n-m}-\dots-e_{n-1})v(1-e_{n-1})\\
        &=v^mv(1-e_{n-m-1}-\dots-e_{n-2})(1-e_{n-1})\;\;\;\;\;\;\;\;\;\big(\text{by } \eqref{eq:intertwining_v}\big)\\
        &=v^{m+1}(1-e_{n-m-1}-\dots-e_{n-2}-e_{n-1})\;\;\;\;\;\;\big(\text{by orthogonality of }e_k s\big)
    \end{align*}
    Thus $x^m=v^m(1-e_{n-m}-\dots-e_{n-1})$ for all $1\leq m \leq n-1$. In particular for $m=n-1$, we get $x^{n-1}=v^{n-1}(1-e_1-e_2-\dots-e_{n-1})=v^{n-1}e_0$. Since $r \mapsto rv^{n-1}$ is a bijection of $R$, 
    $$Rx^{n-1}=Rv^{n-1}e_0=Re_0.$$
    Therefore, for any $a \in R$
    \begin{align*}
        ax=0 &\iff av(1-e_{n-1})=0\\
        & \iff av=ave_{n-1}\\
        & \iff av \in Re_{n-1}\\
        & \iff a \in Re_{n-1}v^{-1}\\
        & \iff a \in Rv^{-1}e_0 \;\;\;\;\;\;\;\;\;\;\big(\text{since } e_{n-1}v^{-1}=v^{-1}e_0 \text{ by } \eqref{cy eqv consequence}\big)\\
        & \iff a \in Re_0.
    \end{align*}
    Therefore, $l_R(x)=Re_0=Rx^{n-1}$.
\end{proof}

\begin{theorem}
    Let $R$ be a unital ring with $n \in U(R)$, $\omega \in U(Z(R))$ and $\omega^i-\omega^j \in U(R)$  for all $i \neq j,~ 0 \leq i,j \leq n-1$. Suppose there exist $a,b \in R$ with $u:=[a,b]$ and $u^n=1$.

    There exists $v \in U(R)$ such that $vuv^{-1}=\omega^{-1}u$.
    Then there exists a ring $S$ such that $R \cong M_n(S)$.
\end{theorem}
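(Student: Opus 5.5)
This is the alternative route to Theorem \ref{thm: Structure theorem due to commutator}, so I will derive it by combining Proposition \ref{prop: exitence of regular element} with the Goyal--Khurana characterisation, Proposition \ref{prop: Goyal Khurana characterization of matrix rings}.

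The hypotheses here are exactly those of Proposition \ref{prop: exitence of regular element}: $n\in U(R)$, $\omega\in U(Z(R))$, $\omega^i-\omega^j\in U(R)$ for $i\neq j$, a commutator $u=[a,b]$ with $u^n=1$, and a unit $v$ with $vuv^{-1}=\omega^{-1}u$. That proposition gives a regular element $x\in R$ with $l_R(x)=Rx^{n-1}$. Concretely, $x=v(1-e_{n-1})$, where the $e_k$ are the idempotents of Lemma \ref{lem: Creating_idempotents} and $v$ permutes them cyclically by Proposition \ref{prop:cyclically equivaence}.

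Next I will apply Proposition \ref{prop: Goyal Khurana characterization of matrix rings} to this $x$. It says that $R$ is an $n\times n$ matrix ring, and more precisely that $R\cong M_n(S)$ with $S=\End_R(Rx^{n-1})$. So setting $S:=\End_R(Rx^{n-1})$ proves the theorem.

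I expect no real obstacle, since all the work was done in Proposition \ref{prop: exitence of regular element}. The one point to check is that $S$ is a unital ring, which holds because the endomorphism ring of any module is unital with the identity map as unity.

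As a consistency check, the proof of Proposition \ref{prop: exitence of regular element} shows $Rx^{n-1}=Re_0$. Hence $S\cong\End_R(Re_0)\cong (e_0Re_0)^{\mathrm{op}}$, or $e_0Re_0$ itself depending on the convention for composing endomorphisms. This agrees with Theorem \ref{thm: Structure theorem due to commutator} up to the harmless isomorphism $M_n(S^{\mathrm{op}})\cong M_n(S)^{\mathrm{op}}$. That identification is only a remark, though, since the statement asks only for the existence of some unital ring $S$.
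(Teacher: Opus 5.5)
Your proof is correct and is exactly the paper's argument: Proposition \ref{prop: exitence of regular element} gives a regular $x=v(1-e_{n-1})$ with $l_R(x)=Rx^{n-1}$, and the Goyal--Khurana characterisation (Proposition \ref{prop: Goyal Khurana characterization of matrix rings}) then gives $R\cong M_n(S)$ with $S=\End_R(Rx^{n-1})$. Your closing consistency remark is not needed, and its appeal to $M_n(S^{\mathrm{op}})\cong M_n(S)^{\mathrm{op}}$ is muddled; the paper instead cites Lam's convention (endomorphisms of left modules acting on the right), under which $\End_R(Re_0)\cong e_0Re_0$ directly.
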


\begin{proof}
    Using Proposition \ref{prop: exitence of regular element} we get a regular element $x \in R$ such that $l_R(x)=Rx^{n-1}$. Then by Proposition \ref{prop: Goyal Khurana characterization of matrix rings} $R \cong M_n(S)$, where $S=\End_R(Rx^{n-1})$.
\end{proof}
\begin{remark}
It should be noted that $Rx^{n-1}$ is isomorphic to $Re_0$ and $\text {End}(Re_0)\cong e_0Re_0$ as rings (cf. \cite[Corollary 21.7, p.~320]{Lam}). Therefore, Theorem \ref{thm: Structure theorem due to commutator} is consistent with Proposition \ref{prop: Goyal Khurana characterization of matrix rings}.
\end{remark}

\section*{Acknowledgements}
   The authors express their gratitude to Prof. D Khurana for making the authors aware of the paper \cite{GoKh}.  The authors would also like to acknowledge A Naolekar, P W Ng and B Sury for their suggestions regarding this article.  The first-named author would like to thank the Indian Institute of Technology Madras for financial support (Office order No.F.ARU/R10/IPDF/2024). The third-named author gratefully acknowledges the financial support and hospitality provided by Dr. S Barik during the visit at ISI Bangalore, which is funded through his INSPIRE Faculty Fellowship Research Grant (DST/INSPIRE/Faculty/\\2023/IFA-23-MA-201) from the Department of Science and Technology (DST), Government of India.  

\end{document}